\documentclass[12pt]{amsart}
\usepackage{epsfig,color}

\headheight=6.15pt \textheight=8.75in \textwidth=6.5in
\oddsidemargin=0in \evensidemargin=0in \topmargin=0in



\makeatother

\setcounter{section}{-1}

\theoremstyle{definition}

\def\fnum{equation} 
\newtheorem{Thm}[\fnum]{Theorem}
\newtheorem{Cor}[\fnum]{Corollary}

\newtheorem{Lem}[\fnum]{Lemma}

\numberwithin{equation}{section}

\newcommand{\Vol}{{\text{Vol}}}

\def\ZZ{{\bold Z}}
\def\RR{{\bold R}}

\newcommand{\dv}{{\text {div}}}
\newcommand{\e}{{\text {e}}}

\newcommand{\cL}{{\mathcal{L}}}

\newcommand{\eqr}[1]{(\ref{#1})}

\begin{document}

\title[Frequency bounds for Ornstein-Uhlenbeck eigenfunctions]{Sharp frequency bounds for eigenfunctions of the Ornstein-Uhlenbeck operator}
\author[]{Tobias Holck Colding}%
\address{MIT, Dept. of Math.\\
77 Massachusetts Avenue, Cambridge, MA 02139-4307.}
\author[]{William P. Minicozzi II}%
 
\thanks{The  authors
were partially supported by NSF Grants DMS 1404540,  DMS 1206827 and DMS 1707270.}


\email{colding@math.mit.edu  and minicozz@math.mit.edu}

\maketitle

\begin{abstract}
We prove sharp bounds for the  growth rate of eigenfunctions of the Ornstein-Uhlenbeck operator and its natural generalizations.  The bounds are sharp even up to lower order terms and have important applications to geometric flows.  
\end{abstract}

\section{Introduction}

The Ornstein-Uhlenbeck operator (or drift Laplacian),  $\cL$ on $\RR^n$ is the second order operator $\cL u = \Delta u - \langle \nabla f , \nabla u \rangle$, where $f = \frac{|x|^2}{4}$.  It is self-adjoint with respect to the Gaussian $L^2$ inner product whose norm is $\| u \|_{L^2}^2 = \int u^2 \, \e^{-f}$.   
We study here the   rate of growth of drift eigenfunctions $u$ with $\cL\,u=- \lambda u$.  
The  results given here  are important ingredients in the proof of the Ren\'e Thom gradient conjecture for the arrival time function; see \cite{CM3}.

 \vskip2mm
 It is easy to see that if $\cL u =0$ and $\| u \|_{L^2} < \infty$, then $u$ must be constant.  More generally, if $\cL u = - \lambda u$ and $\| u \|_{L^2} < \infty$, then $\lambda$ is a half-integer and $u$ is a polynomial of degree $2\lambda$.  When $n=1$, these polynomials are the Hermite polynomials and the equation $\cL u = - \lambda u$ is Hermite's equation.    Hermite's equation has a dichotomy where either a solution is polynomial, or it grows faster than any exponential.

We will consider a more general class of drift Schr\"odinger  equations, where 
  $u$ satisfies  
\begin{align}
\cL_f\,u+V\,u = 0 \, ,
\end{align}
for some function $V$ and some function $f(x)=f(|x|)$, where $f$ only depends on the distance to the origin.  For
 the Ornstein-Uhlenbeck operator,  $f(r)=\frac{r^2}{4}$ and, thus, $f'(r)=\frac{r}{2}$.  

The  frequency $U$ of $u$   measures  the   rate of growth of $u$.  If $u(x) = |x|^d$, then $U= d$. We show:  

\begin{Thm}   \label{t:main}
Suppose that $f'(r)\geq \frac{r}{2}$.  Given $\epsilon>0$ and $\delta>0$, there exist $r_1>0$ such that if $U(\bar{r}_1)\geq \delta+2\,\sup \, \{ 0 ,   V \} $ for some $\bar{r}_1 \geq r_1$, then for all $r\geq R(\bar{r}_1)$
\begin{align}     
U(r) &> \frac{r^2}{2}-n-2\,\sup V-\epsilon\, . \label{e:inductive}  
\end{align}
\end{Thm}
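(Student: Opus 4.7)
The plan is to derive a sharp differential inequality for the frequency $U(r)$ and then integrate it. For the definition to match the theorem's sharp bound, one normalizes so that radial powers $u = r^d$ give $U = d$ and so that the Gaussian mode $u = \e^{r^2/4}$ (which is an eigenfunction with $V = -n/2$, realizing equality $\tfrac{r^2}{2} - n - 2\sup V = \tfrac{r^2}{2}$) gives $U(r) = \tfrac{r^2}{2}$. A convenient choice compatible with both benchmarks is
\[
U(r) = \tfrac{1}{2}\Bigl[ r\,(\log I)'(r) - (n-1) + r f'(r) \Bigr],\qquad I(r) = \int_{\partial B_r} u^2\,\e^{-f}.
\]

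The main step is to compute $U'(r)$ using $\cL_f u + V u = 0$. Differentiating $I$ produces a Dirichlet-type integral together with a boundary cross term $\int u\,u_\nu\,\e^{-f}$; the PDE converts the Dirichlet integral into a bulk term $-\int V u^2\,\e^{-f}$ plus boundary data, while Cauchy--Schwarz on the cross term rewrites it as $U^2/r$. After collecting terms and dropping the non-negative contribution $2\bigl(f'(r) - \tfrac{r}{2}\bigr)\cdot(\text{positive})$ allowed by the assumption $f'(r) \geq r/2$, I expect an inequality of the form
\[
U'(r) \;\geq\; r \;-\; \frac{n + 2\sup V}{r} \;-\; \frac{\eta(r)}{r},
\]
valid in the regime $U(r) \geq \delta + 2\sup\{0,V\}$, where $\eta(r) \to 0$ as $r \to \infty$. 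The $-n/r$ splits as $-(n-1)/r$ from the sphere Jacobian plus $-1/r$ from differentiating $r f'/2$, and the $-2\sup V/r$ arises by bounding $\int V u^2 \,\e^{-f} \leq \sup V \cdot I$.

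Given the differential inequality, I would run a continuation argument. The hypothesis $U(\bar r_1) \geq \delta + 2\sup\{0,V\}$ places us in the good regime at $\bar r_1$, and the threshold is chosen precisely so that the right-hand side is strictly positive once $r_1$ is taken large enough depending on $n, \sup V, \delta, \epsilon$. Hence the good regime is propagated forward and the differential inequality holds on $[\bar r_1, \infty)$. Integrating,
\[
U(r) - U(\bar r_1) \;\geq\; \frac{r^2 - \bar r_1^2}{2} \;-\; (n + 2\sup V)\log\frac{r}{\bar r_1} \;-\; o(1)\,\log\frac{r}{\bar r_1},
\]
which for $r \geq R(\bar r_1)$ sufficiently large exceeds $\tfrac{r^2}{2} - n - 2\sup V - \epsilon$, as required.

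The main obstacle is producing the differential inequality with the \emph{sharp} constants $-n$ and $-2\sup V$. Extracting $-\sup V$ rather than $-\sup|V|$ demands bounding only the positive part of $V$, which is only legitimate when that term is paired with the positive quantity $I$; extracting $-n$ rather than a larger multiple of $n$ requires the Cauchy--Schwarz step on the cross term to produce a $U^2/r$ that cancels exactly with a matching term generated by differentiating $r f'/2$, and any slack in this cancellation would degrade the sharpness. A secondary technical point is verifying that $I(r) > 0$ so that $U$ remains well-defined; this follows from the bootstrap, since $U \geq \delta + 2\sup\{0,V\}$ forces $I$ to grow super-exponentially and hence not vanish on $[\bar r_1, \infty)$.
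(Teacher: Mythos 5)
Your frequency normalization is equivalent to the paper's (since $f$ is radial, $\int_{\partial B_r} u^2\,\e^{-f}=\e^{-f(r)}\int_{\partial B_r}u^2$, and your $U$ simplifies to $\tfrac{1}{2}\bigl[r\,\tfrac{d}{dr}\log\int_{\partial B_r}u^2-(n-1)\bigr]$, matching \eqr{e:defU}), and your sanity checks on $r^d$ and $\e^{r^2/4}$ are correct and instructive. But the core of your argument contains a genuine gap, at two levels.

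First, the differential inequality you propose, $U'(r)\geq r-\tfrac{n+2\sup V}{r}-\tfrac{\eta(r)}{r}$, is not a valid consequence of the equation in the regime $U\geq \delta+2\sup\{0,V\}$. The inequality that the equation actually delivers (Lemma \ref{l:Uonetime}) is \emph{nonlinear} in $U$:
\begin{align}
U'\;\geq\; U\Bigl(f'-\tfrac{n-2}{r}-\tfrac{U}{r}\Bigr)-r\,\sup V\, .
\end{align}
The Cauchy--Schwarz term produces $-U^2/r$, so for $U$ comparable to $r^2/2$ the right side is roughly $U\cdot\tfrac{2+2\sup V-\hat U}{r}-r\sup V$ with $\hat U=U-\tfrac{r^2}{2}+n+2\sup V$, which is $\geq r$ precisely when $\hat U\leq 0$ and becomes \emph{negative} once $\hat U$ is large. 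So $U'\geq r-\cdots$ is not a pointwise ODI for $U$; it is a statement about what happens at the comparison level. Second, even if your linear ODI held, integrating it from $\bar r_1$ yields $U(r)\geq U(\bar r_1)+\tfrac{r^2-\bar r_1^2}{2}-(n+2\sup V+o(1))\log\tfrac{r}{\bar r_1}$, and the right side is \emph{eventually below} $\tfrac{r^2}{2}-n-2\sup V-\epsilon$ because you have subtracted the constant $\tfrac{\bar r_1^2}{2}$ and a logarithmically divergent term, while the initial datum $U(\bar r_1)\geq\delta+2\sup\{0,V\}$ is far smaller than $\tfrac{\bar r_1^2}{2}$. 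The integration step, as written, cannot close.

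The paper instead runs a barrier (comparison) argument. It writes the inequality above as $P_{f,\sup V}\,U\geq 0$ for the first-order operator $P_{f,\lambda}\,g=(\log g)'+\tfrac{n-2}{r}-f'+\tfrac{g}{r}+\tfrac{r\lambda}{g}$, exhibits the explicit barrier $g(r)=\tfrac{r^2}{2}-n-\epsilon-2\lambda$ as a strict supersolution ($P_{f,\lambda}\,g\leq -\tfrac{\epsilon}{2r}$, Lemma \ref{l:chooseg}), and then proves two things: (i) once $U>g$ at a single radius, the ODE maximum principle (first half of Lemma \ref{l:indu}) keeps $U>g$ thereafter; (ii) the strict sign of $P_{f,\lambda}g$ forces $U$, starting merely above the threshold $\delta+2\sup\{0,V\}$, to catch up with $g$ in finite $r$. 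Step (ii) is where the threshold enters nontrivially, and for $\sup V>0$ it needs the more delicate three-stage argument of Lemma \ref{l:generalbound} and Theorem \ref{t:generalbound}, since the $\tfrac{r\lambda}{g}$ term can have the wrong sign — your outline does not touch this case at all. In short: you found the right benchmark and the right constants, but the missing idea is that the nonlinear $-U^2/r$ term must be exploited through a barrier comparison rather than linearized away and integrated.
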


We will construct examples  that show that the lower bound for $U$ is sharp in all dimensions; not only is the quadratic coefficient $\frac{1}{2}$ sharp, but also the constant $-n$ cannot be improved.  

The theorem is also sharp in the dependence on the $\sup V$.  Namely,  if $V= \frac{k}{2}$ is a positive half-integer, then the polynomial solutions mentioned above 
have $U$ asymptotic to $k$.  Thus, the threshold $\delta + 2\sup V$ is sharp.  Furthermore, we will see that \eqr{e:inductive}  
 is also sharp in $\sup V$.
 
 Theorem \ref{t:main} shows that there is a sharp dichotomy for the growth:  either $U$ is bounded and $u$ grows at most polynomially, or $u$ grows at least like $r^{-n-2\,\sup V} \,\e^{ \frac{r^2}{4}}$.

\vskip2mm
For eigenfunctions of the Ornstein-Uhlenbeck operator, where $f(r)=\frac{r^2}{4}$, we also get a lower bound for the derivative of the frequency:

 \begin{Thm}  \label{t:mainLem2}
 If  $f(x)=\frac{|x|^2}{4}$ and $\cL\,u+\lambda\,u=0$, then either $\limsup_{r \to \infty} U(r) \leq 2 |\lambda|$ or there exists $R$ so that for all $r \geq R$
 \begin{align}	\label{e:0p5}
 U'\geq  \frac{r}{2}\left(1+\frac{r^2}{2\,n+4+4\,U(r)-r^2}- \frac{2\,n+8\,\lambda}{2n + 4\, U(r) - r^2} \right)  +   \frac{O(r^{1-n})}{{2n + 4\, U(r) - r^2}} \, , 
 \end{align}
 where $O(r^{1-n})$ is a term that is bounded by a constant  times $r^{1-n}$.
 \end{Thm}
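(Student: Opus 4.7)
The plan is to derive an exact identity for $U'(r)$ from the eigenvalue equation written in polar coordinates and then combine it with Theorem~\ref{t:main} to reach the displayed inequality. If $\limsup_{r\to\infty}U(r)\leq 2|\lambda|$ we are in the first alternative and done; otherwise, fix small $\delta,\epsilon>0$ and pick $\bar{r}_1$ with $U(\bar{r}_1)>2|\lambda|+\delta\geq 2\sup\{0,\lambda\}+\delta$. Applying Theorem~\ref{t:main} with $V\equiv\lambda$ (the hypothesis $f'\geq r/2$ holds with equality for $f=r^2/4$) gives $U(r)>r^2/2-n-2\lambda-\epsilon$ for all $r\geq R$. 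This ensures that both denominators $2n+4U(r)-r^2$ and $2n+4+4U(r)-r^2$ in \eqr{e:0p5} are positive and of order $r^2$ for large $r$, so the inequality is at least well-posed.

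Set $I(r)=\int_{\partial B_r}u^2\,\e^{-f}$ and $F(r)=\int_{\partial B_r}u\,u_r\,\e^{-f}$, so that $U=rF/I$. A direct computation using $f'(r)=r/2$ yields
\[
\frac{I'}{I}\;=\;\frac{2U}{r}+\frac{n-1}{r}-\frac{r}{2}.
\]
Differentiating $F$, integrating by parts on the sphere (where $\e^{-f}$ is constant), and substituting the polar form of $\cL u+\lambda u=0$, namely $u_{rr}=(r/2-(n-1)/r)u_r-r^{-2}\Delta_{S^{n-1}}u-\lambda u$, produces the clean identity
\[
F'(r)\;=\;\int_{\partial B_r}|\nabla u|^2\,\e^{-f}\;-\;\lambda\,I(r).
\]
The quotient rule applied to $U=rF/I$ then gives an exact expression for $U'(r)$ in terms of these boundary integrals together with $U$, $\lambda$, $r$, and $n$.

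Split $|\nabla u|^2=u_r^2+r^{-2}|\nabla_{S^{n-1}}u|^2$. The radial part is handled by weighted Cauchy-Schwarz, $\int_{\partial B_r} u_r^2\,\e^{-f}\cdot I\geq F^2$, i.e.\ $\int u_r^2\,\e^{-f}/I\geq U^2/r^2$; this is the source of the leading $r/2$ in \eqr{e:0p5}. For the tangential part I would decompose $u|_{\partial B_r}$ into spherical harmonics $\sum_k u_k(r)\,Y_k(\omega)$ with $-\Delta_{S^{n-1}}Y_k=k(k+n-2)Y_k$ and use the radial ODE satisfied by each coefficient $u_k$ (inherited from $\cL u+\lambda u=0$) to lower-bound $\int|\nabla_{S^{n-1}}u|^2\,\e^{-f}/I$ in terms of $U$, $\lambda$, $n$, and $r$. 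After algebraic rearrangement this produces the two structural fractions $r^2/(2n+4+4U-r^2)$ and $(2n+8\lambda)/(2n+4U-r^2)$ in the claimed form of \eqr{e:0p5}.

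The main obstacle is tracking the $O(r^{1-n})/(2n+4U-r^2)$ error. I expect this term to arise from interference between different spherical modes together with exponentially small Gaussian-tail corrections; the rate $r^{1-n}$ is tied to the inverse of the sphere-area factor $r^{n-1}$ that appears when normalising boundary integrals on $S^{n-1}$. The hardest point is to show that the constants in this error do not deteriorate as $U$ approaches the lower threshold $r^2/2-n-2\lambda$, which forces the use of Theorem~\ref{t:main} in its strict, quantitative form so that $2n+4U-r^2$ remains bounded below on a scale that absorbs the lower-order cross terms in the spherical-harmonic expansion.
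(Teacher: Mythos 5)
Your setup through the radial/tangential split is sound: the identities $(\log I)' = 2U/r + (n-1)/r - r/2$ and $F' = \int_{\partial B_r} |\nabla u|^2 e^{-f} - \lambda I$ are correct, and Cauchy--Schwarz on the radial part indeed recovers the leading $r/2$ in the eventual bound. But the proposal stops at precisely the step that carries the content of the theorem: you never actually show how the tangential term $\frac{1}{rI}\int_{\partial B_r}|\nabla_{S^{n-1}}u|^2 e^{-f}$ gets lower-bounded in terms of $U$, $r$, $n$, $\lambda$ alone, nor why the answer should take the exact rational form in \eqr{e:0p5}. The spherical-harmonics sketch does not obviously lead anywhere: each mode $u_k$ satisfies its own Hermite-type ODE, but the quotient $\int|\nabla_{S^{n-1}}u|^2 e^{-f}/\int u^2 e^{-f}$ mixes all modes and there is no evident pointwise (in $r$) lower bound on it that would yield the two denominators $2n+4+4U-r^2$ and $2n+4U-r^2$, which are distinct and not simply the eigenvalue of a single mode. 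This is a genuine gap, not a routine computation left to the reader.

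There is also a structural mismatch with where the error term comes from. The paper does not estimate the tangential gradient on a single sphere at all. Instead it uses a drift Rellich/Pohozaev identity (Lemma~\ref{l:poho}, processed through Lemmas~\ref{l:firstlowerD'} and~\ref{l:combine}) to convert the boundary gradient into an \emph{interior} integral, arriving at Corollary~\ref{c:Upbound}:
\begin{align}
U'(r)\geq \frac{r}{2}+I^{-1}(r)\,\e^{\frac{r^2}{4}}\,  r^{1-n} \int_{B_r}u^2 \, \Bigl( f-\frac{n}{2} -2\lambda  \Bigr) \e^{-f}   \,  .
\end{align}
The second step is then to bound that interior integral using the \emph{monotonicity} of $W = U - r^2/4$ (itself a consequence of the weak bound $U'\geq r/2$ from Corollary~\ref{c:Upbound} once $\limsup U > 2|\lambda|$): monotone $W$ gives $I(s)/I(r) \geq (s/r)^{2W(r)} e^{(s^2-r^2)/4}$ for $s \leq r$, and integrating $(s^2 - 2n - 8\lambda)s^{n-1}I(s)e^{-s^2/4}$ against this produces exactly the two fractions, with the $O(r^{1-n})$ term coming from the boundary contribution at the fixed lower endpoint $r_0$ of the $s$-integral, not from spherical-mode interference or Gaussian tails as you surmise. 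Your proposal never invokes this monotonicity of $W$, which is indispensable to the paper's argument and is the mechanism that translates a bound on $U$ into a bound on $U'$. To turn the proposal into a proof you would need either to supply the missing tangential estimate (and it is unclear one exists in the required form), or to switch to the paper's strategy of moving the boundary integral into the interior and exploiting the integrated growth of $I$.
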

 
If we set $W=U-\frac{r^2}{4}+\frac{n}{2}$, then \eqr{e:0p5}  becomes 
$
 W'\geq \frac{r}{8}\,\left(\frac{r^2}{W+1}-\frac{2\,n+8\,\lambda}{W}\right)
$ up to lower order terms.  Integrating leads to the bound $U \geq \frac{1}{2} \, r^2 - n-1-2\,\lambda$, which is slightly worse than \eqr{e:inductive}.  
However, this inequality gives a (positive) derivative bound for all values of $U$.

\vskip2mm
Our arguments are quite flexible and generalize.  For instance:

\begin{Thm}  \label{t:openintro}
Suppose that $f'(r)\geq \frac{r}{2}$.  Let $M$ be an open manifold with nonnegative Ricci curvature, Euclidean volume growth and Green's function $G$.  Fix $x_0\in M$ and let $b$ be given by $b^{2-n}=G(x_0,\cdot)$. Given $\epsilon>0$ and $\delta>0$, there exist $r_1>0$  such that if  $\cL_{f(b)}\,u=0$ and 
$U(\bar{r})\geq \delta$ for some $\bar{r} \geq r_1$, then for all $r\geq R(\bar{r})$
\begin{align}     
U(r) &> \frac{r^2}{2}-n-\epsilon\, .
\end{align} 
\end{Thm}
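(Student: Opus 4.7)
The plan is to adapt the proof of Theorem \ref{t:main}, replacing the Euclidean distance $|x|$ with the harmonic radial function $b$. On $(M^n, g)$ with $\Ric \geq 0$ and Euclidean volume growth, $b$ inherits essentially all analytic properties of $|x|$ used in the Euclidean argument: from $\Delta G = 0$ and $G = b^{2-n}$, one has $\Delta b = (n-1)\,|\nabla b|^2 / b$ and $\Delta b^2 = 2n\,|\nabla b|^2$. The Colding and Colding-Minicozzi gradient estimates give $|\nabla b| \leq 1$ pointwise, together with averaged convergence $|\nabla b|^2 \to 1$ on large level sets, reflecting the asymptotic conical structure of $M$. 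These are the only geometric inputs needed.

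First, I would define the frequency using $b$-level sets rather than spheres: set $H(r) = \int_{\{b=r\}} u^2\,|\nabla b|\,\e^{-f(b)}$ and $D(r) = \int_{\{b\leq r\}} |\nabla u|^2\,\e^{-f(b)}$, and let $U(r)$ be the ratio used in Theorem \ref{t:main} (essentially $r D(r)/H(r)$, up to normalization). The coarea formula gives $D'(r) = \int_{\{b=r\}} |\nabla u|^2\,|\nabla b|^{-1}\,\e^{-f(b)}$, and integrating by parts in $\{b \leq r\}$ against $\cL_{f(b)}\,u=0$ produces the boundary identity $D(r) = \int_{\{b=r\}} u\,\langle \nabla u, \nabla b\rangle\,\e^{-f(b)}$. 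These are exact analogues of the Euclidean identities, with $|\nabla b|$ taking the place of the constant $1$.

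Differentiating $\log U$ in $r$ yields a formula whose structure parallels the Euclidean case, but with three perturbations: pointwise factors $|\nabla b|^2 \leq 1$ arising from $\Delta b^2 = 2n\,|\nabla b|^2$; the radial contribution $f'(b)\,|\nabla b|^2$ from the drift term; and a discrepancy in the orthogonal decomposition of $|\nabla u|^2$ on level sets of $b$, since $\nabla b$ is not a unit conormal. A Cauchy-Schwarz argument, combined with $|\nabla b|^2 \leq 1$ and the averaged convergence $|\nabla b|^2 \to 1$, converts these perturbations into a differential inequality for $U(r)$ that matches the one driving Theorem \ref{t:main} up to multiplicative and additive errors $\epsilon_r \to 0$ as $r \to \infty$. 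The inductive integration argument then applies essentially unchanged: starting from $U(\bar{r}) \geq \delta$, choose $r_1$ so large that $\epsilon_r$ is uniformly small for $r \geq r_1$, and integrate to get $U(r) > r^2/2 - n - \epsilon$ for $r \geq R(\bar{r})$.

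The main obstacle is to control $\epsilon_r$ sharply enough that both the quadratic coefficient $1/2$ and the constant $-n$ survive. The quadratic coefficient is preserved as soon as $|\nabla b|^2 \to 1$ in any averaged sense, but the sharp constant $-n$ requires the quantitative monotonicity formulas for $r^{1-n}\int_{\{b=r\}}|\nabla b|^3$ and for the $L^1$-defect $\int_{\{b \leq r\}}(1 - |\nabla b|^2)\,b^{-n}$ from the Colding--Minicozzi theory of harmonic functions on manifolds with $\Ric \geq 0$. Without such sharp control, one could recover only a weaker constant; the precise assertion of the theorem thus hinges on these monotonicity formulas, together with the drift-equation gymnastics of Theorem \ref{t:main} transcribed to the $b$-geometry.
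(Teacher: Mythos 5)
Your high-level plan (redefine $I$, $D$, $U$ on level sets of $b$ and rerun the machinery behind Theorem \ref{t:main}) is the right starting point, but the execution misses the key structural fact that the paper exploits, and as a result you set yourself up to solve a hard asymptotic problem that in fact does not arise.

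The crucial point is that the argument here is \emph{exact}, not perturbative. With the paper's definitions
\begin{align}
I(r)=r^{1-n}\int_{b=r}u^2\,|\nabla b|,\qquad D(r)=r^{2-n}\,\e^{f(r)}\int_{b\le r}|\nabla u|^2\,\e^{-f(b)},
\end{align}
the weight $|\nabla b|$ in $I$ is chosen precisely so that, when $b^{2-n}$ is harmonic, the variation of the weighted surface measure $r^{1-n}|\nabla b|\,d\Vol$ along the vector field $\nabla b/|\nabla b|^2$ vanishes identically (this is the flux-constancy of the harmonic function $b^{2-n}$, recorded in the short lemma at the end of Section \ref{s:s3}). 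Consequently $I'=2D/r$ and $(\log I)'=2U/r$ hold \emph{exactly}, with no error term. Likewise, the co-area formula gives
\begin{align}
D'(r)=\frac{2-n}{r}D+f'D+r^{2-n}\int_{b=r}\frac{|\nabla u|^2}{|\nabla b|},
\end{align}
and Cauchy--Schwarz applied with the $|\nabla b|^{-1}$ weight (inequality \eqr{e:CSgeneral}) yields $D'\ge \frac{2-n}{r}D+f'D+\frac{U}{r}D$ exactly. Together these give $P_{f,0}\,U\ge 0$ on the nose, after which Lemma \ref{l:chooseg} and Theorem \ref{t:generalbound} apply verbatim.

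Because of this, none of the asymptotic geometric inputs you invoke --- $|\nabla b|\le 1$, averaged convergence $|\nabla b|^2\to 1$, monotonicity of $r^{1-n}\int_{b=r}|\nabla b|^3$, or $L^1$-defect estimates --- are needed. The only role of nonnegative Ricci curvature and Euclidean volume growth is to guarantee that $b$ is proper (so the level sets are compact and the co-area argument makes sense); once $b$ is proper, harmonicity of $b^{2-n}$ alone drives everything. The ``main obstacle'' you identify --- controlling $\epsilon_r$ sharply enough to preserve both the $\tfrac12$ and the $-n$ --- is a real obstacle \emph{for the perturbative route you propose}, and your proposal does not close it: you gesture at the Colding--Minicozzi monotonicity formulas but do not show they yield the sharp constant, and it is not clear they would. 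The gap is that you have not found the exact weighted identities that make the asymptotic analysis unnecessary.
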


\vskip2mm
In this theorem, $\cL_{f(b)}\,u=\Delta-\langle \nabla u,\nabla f(b)\rangle$ and $I$, $D$, and $U$ are defined in terms of $b$; see \eqr{e:Ib}, \eqr{e:Db} and \eqr{e:Ub}.

  \section{The sharp lower bound for $U$}
   
   In this section, $f:\RR^n\to \RR$ is a function that only depends on the distance to the origin.  With slight abuse of notation we write $f(x)=f(|x|)$ and denote $\partial_rf$ by $f'$.  
   
   \vskip1mm
Define quantities $I(r)$, $D(r)$, and the frequency $U(r)$ by
\begin{align}
	I (r) &= r^{1-n} \int_{\partial B_r} u^2 \, , 	\label{e:defI}\\
D(r)& =  r^{2-n} \int_{\partial B_r} u u_r  =r^{2-n} \, \e^{  f(r)} \, \int_{B_r}  \left(|\nabla u|^2-V\,u^2\right) \, \e^{-f}  \, ,\label{e:defD}\\
U(r)&= \frac{D}{I}\, .\label{e:defU}
\end{align}
The frequency $U$ is   the logarithmic derivative of    $\frac{1}{2} \log I$, i.e., $(\log I)' = \frac{2U}{r}$, and thus  measures the polynomial rate of growth of $\sqrt{I}$.
This frequency was  recently used by Bernstein, \cite{B}, to study the asymptotic structure of ends of shrinkers for mean curvature flow.
It is
analogous to a similar quantity for harmonic functions known as Almgren's frequency function, \cite{A}, cf. \cite{GL}, \cite{HS}, \cite{L}, \cite{CM1}, \cite{D}.

  An easy calculation together with that $\dv \left( \e^{-f} \, \nabla v \right) = \e^{-f} \, \cL_f\, v$ shows 
 \begin{align}
 \frac{d}{dr}\left(r^{1-n}\int_{\partial B_r} v \right) &= r^{1-n}\int_{\partial B_r}\frac{dv}{dr} =r^{1-n}\left(\e^{f(r)}\int_{B_r}\cL_f\,v\,\e^{-f}\right)\, . \label{e:diffI}
 \end{align}
Using \eqr{e:diffI} with $\cL_f\, u = 0$ gives that
  the spherical average of a $\cL_f$-harmonic function is constant in $r$.

 \begin{Lem}	\label{l:IandD}
 If $\cL_f\,u+V\,u=0$, then
 \begin{align}
 I'(r) &= \frac{2D(r)}{r}\, ,\\
 ( \log I )' (r)&= \frac{2U(r)}{r}\, ,\\
D'(r)&=  \frac{2-n}{r} \, D + f'(r) \, D + r^{2-n} \,   \int_{\partial B_r}  \left( |\nabla u|^2 - V\, u^2 \right)   \, .\label{e:Donetime0}
 \end{align}
 \end{Lem}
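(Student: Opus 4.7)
The plan is to prove the three identities by direct calculation, using \eqref{e:diffI} as the main tool together with the identity $\dv(\e^{-f}\nabla v)=\e^{-f}\cL_f v$ recorded just before the lemma.

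First I would verify that the two expressions given for $D(r)$ in \eqref{e:defD} actually agree. Multiplying $\cL_f u = -V u$ by $u\,\e^{-f}$, rewriting the left-hand side as $u\cdot\dv(\e^{-f}\nabla u)$, integrating over $B_r$, and applying the divergence theorem gives
\begin{align*}
\e^{-f(r)}\int_{\partial B_r} u\,u_r \;=\; \int_{B_r}\left(|\nabla u|^2 - V u^2\right)\,\e^{-f},
\end{align*}
and multiplying by $r^{2-n}\e^{f(r)}$ yields exactly the claimed equality. This equivalence is the only place where integration by parts is needed.

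Next, for $I'(r)=2D(r)/r$, I would apply the first form of \eqref{e:diffI} to $v=u^2$, obtaining $I'(r)=r^{1-n}\int_{\partial B_r} 2u u_r$, which is precisely $2D(r)/r$ by the boundary-integral expression for $D$. The identity $(\log I)'=2U/r$ is then immediate from $(\log I)'=I'/I$ and the definition $U=D/I$.

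Finally, for \eqref{e:Donetime0} I would start from the volume-integral form of $D$ and write $D(r)=r^{2-n}\e^{f(r)}J(r)$ with $J(r)=\int_{B_r}(|\nabla u|^2 - V u^2)\,\e^{-f}$. The product rule produces three contributions: differentiating $r^{2-n}$ gives $\tfrac{2-n}{r}D$, differentiating $\e^{f(r)}$ gives $f'(r)\,D$, and $J'(r)$ is handled by the fundamental theorem of calculus to give $\int_{\partial B_r}(|\nabla u|^2 - V u^2)\,\e^{-f(r)}$, contributing $r^{2-n}\int_{\partial B_r}(|\nabla u|^2-Vu^2)$ after combining with the prefactor $r^{2-n}\e^{f(r)}$. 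Summing these three terms gives \eqref{e:Donetime0}. There is no genuine obstacle; the only subtlety is bookkeeping which form of $D$ to use at each step, since $I'$ is most naturally computed from the boundary form while $D'$ is most naturally computed from the volume form.
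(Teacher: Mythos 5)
Your proof is correct and follows essentially the same route as the paper: using \eqref{e:diffI} applied to $u^2$ for the first identity, the definition $U=D/I$ for the second, and the product rule applied to the volume-integral form of $D$ for the third. The only cosmetic difference is that you identify $I'$ with $2D/r$ via the boundary-integral form of $D$ (after explicitly re-deriving the equivalence of the two forms in \eqref{e:defD}), whereas the paper lands directly on the volume form; these are the same computation read at different points in the chain of equalities.
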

 
 \begin{proof}
Since $\cL_f  \,u^2=2\,|\nabla u|^2-2\,V\,u^2$,  \eqr{e:diffI} gives
 \begin{align}
 	I'(r) = 2 r^{1-n} \, \e^{  f(r)} \, \int_{B_r} \left(|\nabla u|^2-V\,u^2\right) \, \e^{-f}= \frac{2D(r)}{r} \, .
 \end{align}
 This gives the first two claims.   Differentiating \eqr{e:defD} gives \eqr{e:Donetime0}.
 \end{proof}
 
Define   a (non-linear) first order differential operator on positive functions $g$ on $(0,\infty)$ by
 \begin{align}
 P_{f,\lambda}\,g=(\log g)'+\frac{n-2}{r}-f'+\frac{g}{r}+\frac{r\,\lambda}{g} \, .
\end{align}
We will later use that if $f_2'\geq f_1'$, then $P_{f_1,\lambda}\,g\geq P_{f_2,\lambda}\,g$.   

The key will be that $U$ is a sub-solution of $P$:
 
\begin{Lem}  \label{l:Uonetime}
If $\cL_f\,u+V\,u=0$ and $\infty>U(r) >0$, then 
\begin{align}	
	P_{f, \,\sup V}\,U\geq 0 \, .\label{e:Uonetime}
\end{align}
\end{Lem}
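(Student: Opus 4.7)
The plan is to exploit the identity $(\log U)' = (\log D)' - (\log I)'$, substitute the evolution formulas from Lemma \ref{l:IandD}, and apply a single Cauchy-Schwarz inequality on $\partial B_r$. First I would divide the expression for $D'$ from Lemma \ref{l:IandD} by $D$ and subtract $(\log I)' = 2U/r$ to rewrite
\begin{align*}
(\log U)' = \frac{2-n}{r} + f'(r) - \frac{2U}{r} + \frac{r^{2-n} \int_{\partial B_r}(|\nabla u|^2 - V u^2)}{D}.
\end{align*}
This reduces $P_{f,\sup V} U \geq 0$ to the inequality
\begin{align*}
r^{2-n}\int_{\partial B_r}\bigl(|\nabla u|^2 - V u^2\bigr) \;\geq\; \frac{DU}{r} - r\,(\sup V)\, I.
\end{align*}

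Next I would bound each side separately. For the potential term, $V u^2 \leq (\sup V)\, u^2$ combined with $r^{2-n}\int_{\partial B_r} u^2 = rI$ gives $r^{2-n}\int_{\partial B_r} V u^2 \leq r(\sup V)I$. For the gradient term, I first drop the tangential component, $|\nabla u|^2 \geq u_r^2$, and then apply Cauchy-Schwarz to $D = r^{2-n}\int_{\partial B_r} u\, u_r$:
\begin{align*}
D^2 \;\leq\; \Bigl(r^{2-n}\int_{\partial B_r} u^2\Bigr)\Bigl(r^{2-n}\int_{\partial B_r} u_r^2\Bigr) \;=\; rI \cdot r^{2-n}\int_{\partial B_r} u_r^2 .
\end{align*}
Using $U = D/I$, this rearranges to $r^{2-n}\int_{\partial B_r} u_r^2 \geq D^2/(rI) = DU/r$, which is exactly the piece needed for the gradient term.

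Finally I would combine the two bounds and divide by $D$, which is positive because the hypothesis $0 < U < \infty$ forces $I > 0$ (so $U$ is actually defined) and then $D = UI > 0$; this positivity is what keeps the inequality in the correct direction. I do not foresee a genuine obstacle: the argument is essentially one application of Cauchy-Schwarz together with bookkeeping of the terms produced by Lemma \ref{l:IandD}. The one thing to be careful about is that the estimate for the gradient term is sharp precisely when $u$ is radial and $u_r$ is proportional to $u$, which is consistent with the sharpness claims made for Theorem \ref{t:main} and makes it clear that no slack can be recovered at this step.
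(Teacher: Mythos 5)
Your proof is correct and follows essentially the same route as the paper: substitute the $D'$ formula from Lemma \ref{l:IandD}, apply Cauchy--Schwarz on $\partial B_r$ to get $r^{2-n}\int_{\partial B_r}|\nabla u|^2 \geq DU/r$, replace $V$ by $\sup V$ in the potential term, and divide by $D>0$. The only difference is cosmetic bookkeeping — you write the potential bound as $r(\sup V)\,I$ where the paper writes it as $r\,\sup V\,D/U$, which is the same quantity.
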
 

\begin{proof}
 The Cauchy-Schwarz inequality 
 \begin{align}	\label{e:CS}
 	\frac{D^2}{r} = r^{3-2n} \, \left(\int_{\partial B_r} u u_r \right)^2 \leq  I \, r^{2-n} \int_{\partial B_r} u_r^2 \leq I \, r^{2-n} \int_{\partial B_r} |\nabla u|^2 
\end{align}
together with \eqr{e:Donetime0} gives
 \begin{align}	 \label{e:Donetime}	
D'(r)\geq  \frac{2-n}{r} \, D + f'(r) \, D + \frac{U}{r} \, D - r\,\sup V \, \frac{D}{U} \, ,
 \end{align}
Since $(\log U)' = \frac{D'}{D} - \frac{2U}{r}$ and $D(r) > 0$, dividing \eqr{e:Donetime} by $D$ gives  \eqr{e:Uonetime}.  
\end{proof}

The next lemma shows a maximum principle for the operator $P_{f,\lambda}$.  

\begin{Lem}   	\label{l:indu}
Suppose that $g,\,h:\RR\to (0,\infty)$ satisfy for $r \geq r_1 $
\begin{align}
P_{f,\lambda}\,h\geq 0>P_{f,\lambda}\,g \, .
\end{align}
If $h(R)> g(R)$ for some $ R \geq r_1$, then $h(r) > g(r)$ for all $r\geq R$.

Moreover, if $\epsilon > 0\geq \lambda$,  and $g$ satisfies
\begin{align}
 -\frac{\epsilon}{r} \geq P_{f,\lambda}\,g{\text{ for }} r \geq r_1 \, ,	\label{e:assum2}
\end{align}
then there exists $R= R(h(r_1) , g(r_1) , r_1 ,   \epsilon)$ so that $h \geq g$ for $r \geq R$.
\end{Lem}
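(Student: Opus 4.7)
The plan is to treat both claims as consequences of a standard first-order maximum principle, exploiting the fact that the nonlinearity of $P_{f,\lambda}$ is ``monotone'' in the right way.

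For the first claim, I would argue by contradiction. If $h(R) > g(R)$ but $h(r) \leq g(r)$ somewhere for $r \geq R$, there is a first touching point $r_0 > R$ with $h(r_0) = g(r_0)$ and $h \geq g$ on $[R, r_0]$, so $(h-g)'(r_0) \leq 0$. Subtract the definitions:
\begin{align}
P_{f,\lambda}\,h - P_{f,\lambda}\,g = (\log h)' - (\log g)' + \frac{h-g}{r} + r\lambda\left(\frac{1}{h} - \frac{1}{g}\right).
\end{align}
At $r_0$ the last two terms vanish, and $(\log h)'(r_0) - (\log g)'(r_0) = (h'(r_0)-g'(r_0))/h(r_0) \leq 0$. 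But by hypothesis the left side is strictly positive, a contradiction. This argument is the standard ``first touching point'' maximum principle and the nonlinear $P_{f,\lambda}$ does not interfere because the problematic terms evaluate symmetrically at the touching point.

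For the second claim, in view of the first claim it suffices to find some $R \geq r_1$ with $h(R) \geq g(R)$ (after which $h\geq g$ propagates, using strict inequality and an approximation or directly by monotonicity). Assume for contradiction $h<g$ throughout $[r_1,\infty)$. Using the same subtraction identity and the hypotheses $P_{f,\lambda}\,h\geq 0$ and $P_{f,\lambda}\,g\leq -\epsilon/r$,
\begin{align}
\bigl(\log(h/g)\bigr)' = (\log h)' - (\log g)' \geq \frac{\epsilon}{r} + \frac{g-h}{r} - r\lambda\,\frac{g-h}{hg}.
\end{align}
Since $g-h > 0$ and $\lambda \leq 0$, the last two terms are nonnegative, so $(\log(h/g))' \geq \epsilon/r$. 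Integrating from $r_1$ to $r$ gives
\begin{align}
\frac{h(r)}{g(r)} \geq \frac{h(r_1)}{g(r_1)}\left(\frac{r}{r_1}\right)^{\epsilon},
\end{align}
which is eventually $\geq 1$, explicitly once $r \geq R := r_1\,(g(r_1)/h(r_1))^{1/\epsilon}$. This contradicts $h<g$, and furnishes an $R$ depending only on $h(r_1)$, $g(r_1)$, $r_1$, and $\epsilon$, as required.

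The main technical point to verify carefully is the sign of the $\lambda$-term: one needs $\lambda \leq 0$ together with $g - h > 0$ so that $-r\lambda(g-h)/(hg) \geq 0$, which is exactly the hypothesis $0 \geq \lambda$ in the lemma. I do not expect any real obstacle beyond bookkeeping; the only mild subtlety is upgrading the conclusion of the first claim from ``strict'' at a touching to ``$h\geq g$ afterward'' in the second claim, which follows either by noting that once $h/g > 1$ at some point $R$, the first claim gives strict inequality for all larger $r$, or by a limiting argument.
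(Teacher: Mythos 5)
Your proof is correct and follows essentially the same route as the paper's: for the first claim, a first-touching-point contradiction where the $\frac{h-g}{r}$ and $r\lambda(\tfrac1h-\tfrac1g)$ terms vanish at the touching point, forcing the sign of $(\log h)'-(\log g)'$ to clash with $P_{f,\lambda}h - P_{f,\lambda}g > 0$; for the second, integrating $(\log(h/g))' \geq \epsilon/r$ (valid while $h\le g$, using $\lambda\le 0$) to produce the explicit $R$. The only cosmetic difference is that you package both parts through the single subtraction identity, whereas the paper argues part one by rewriting $P_{f,\lambda}h\ge 0$ at the touching point and substituting $h(s)=g(s)$ to conclude $P_{f,\lambda}g(s)\geq 0$ — logically the same step.
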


\begin{proof}
We will prove the first claim by contradiction.  Suppose not, then there exists $s>R$ such that $h(s)=g(s)$ and $h(t)>  g(t)$ for all $s>t\geq R$.  This implies that
\begin{align}
(\log h)'(s)\leq (\log g)'(s)=\frac{g'}{g}\, .
\end{align}
On the other hand, by assumption $P_{f,\lambda}\,h\geq 0$ and thus
\begin{align}
(\log h)'(s) \geq \frac{2-n}{s}  + f'(s)   - \frac{h(s)}{s}   -   s \, \frac{\lambda}{h(s)} = \frac{2-n}{s}  + f'(s)   - \frac{g(s)}{s}   -   s \, \frac{\lambda}{g(s)}\, .
\end{align}
Together these two inequalities gives that $P_{f,\lambda}\,g\geq 0$ which is the desired contradiction.

The second claim will follow from the first once we show that there is some $R\geq r_1$ so that $h> g$ for some $r$ with $R\geq r\geq r_1$.  To see this, we suppose that
$h \leq g$ for $r_1\leq r \leq R$ and then get an upper bound on $R$. On this interval, 
since $\lambda\leq 0$ we get that
\begin{align}
(\log h)'(s) - (\log g)'(s) \geq P_{f,\lambda}\,h-P_{f,\lambda}\,g \geq \frac{\epsilon}{s} \, .
\end{align}
Integrating this from $r_1$ to $R$ gives
\begin{align}
	 1\geq \frac{h(R)}{g(R)} \geq \frac{h(r_1)}{g(r_1)} \, \left( \frac{R}{r_1} \right)^{\epsilon} \, .
\end{align}
Thus, we see that $R^{\epsilon} \leq r_1^{\epsilon} \, \frac{g(r_1)}{h(r_1)}$.  
\end{proof}

\begin{Lem}	\label{l:chooseg}
Suppose that $f(r) = \frac{r^2}{4} $, $\epsilon > 0$ and let 
$g(r) = \frac{r^2}{2}-n-\epsilon-2\,\lambda$, 
then there exists $r_1=r_1(\epsilon,n)$  so that for $r \geq r_1$  
\begin{align}
-\frac{\epsilon}{2r} \geq P_{f,\lambda}\,g \, ,
\end{align}
\end{Lem}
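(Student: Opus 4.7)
The plan is to substitute directly into the definition of $P_{f,\lambda}$ and expand the result for large $r$. The choice of $g(r)=r^2/2-n-\epsilon-2\lambda$ is engineered so that in $P_{f,\lambda}\,g$ the term $-f'(r)=-r/2$ is cancelled exactly by the leading $r/2$ coming from $g/r$. This is precisely why the quadratic coefficient of $g$ must be $1/2$: any other coefficient would leave a residue of order $r$, making a bound of order $-\epsilon/r$ impossible.

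The first step is the substitution itself. With $f'(r)=r/2$, $g'(r)=r$, and $(\log g)' = g'/g = r/g$, carrying out the $r/2$ cancellation and combining $(n-2)/r$ with $-(n+\epsilon+2\lambda)/r$ coming from $g/r$ should leave the clean expression
\begin{align*}
P_{f,\lambda}\,g = \frac{r(1+\lambda)}{g} - \frac{2+\epsilon+2\lambda}{r}\, .
\end{align*}
The next step is to expand $r(1+\lambda)/g$. Writing $g=(r^2/2)\bigl(1-2(n+\epsilon+2\lambda)/r^2\bigr)$ and applying a geometric series, valid once $r^2$ is large compared to $n+|\lambda|+\epsilon$, gives
\begin{align*}
\frac{r(1+\lambda)}{g} = \frac{2(1+\lambda)}{r} + O(r^{-3})\, ,
\end{align*}
and a second exact cancellation of $2(1+\lambda)$ against $2+2\lambda$ leaves $P_{f,\lambda}\,g = -\epsilon/r + O(r^{-3})$, where the constant in the remainder is explicit and depends on $n$, $\epsilon$, and $\lambda$.

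Finally I would choose $r_1$ large enough (depending on $n$, $\epsilon$, and implicitly on $\lambda$) that the $O(r^{-3})$ remainder is dominated in absolute value by $\epsilon/(2r)$ for $r\geq r_1$; this gives the stated inequality. I do not expect any genuine obstacle: the content of the lemma is the algebraic identity that $g(r)=r^2/2-n-\epsilon-2\lambda$ is the precise quadratic (in $r$) function whose $P_{f,\lambda}$ has leading order exactly $-\epsilon/r$, combined with a routine remainder estimate. The only care needed is to pick $r_1$ large enough that $g(r)>0$ and the geometric series converges, both of which reduce to a single lower bound of the form $r_1^2\geq C(n,\epsilon,\lambda)$.
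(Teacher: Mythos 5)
Your proposal is correct and follows essentially the same route as the paper: substitute, observe that $-f'$ cancels against the leading term of $g/r$, reduce to $P_{f,\lambda}\,g = \frac{r(1+\lambda)}{g} - \frac{2+\epsilon+2\lambda}{r}$, note the further cancellation of $2(1+\lambda)$ against $2+2\lambda$, and absorb the remainder by choosing $r_1$ large. The only cosmetic difference is that you phrase the final step as a geometric-series expansion with an $O(r^{-3})$ remainder, whereas the paper bounds the single fraction $\frac{2r(\lambda+1)}{r^2-2(n+\epsilon+2\lambda)}$ by $\frac{2+2\lambda+\epsilon/2}{r}$ directly; the resulting choice of $r_1$ (which, as you note, implicitly depends on $\lambda$ as well as $n,\epsilon$) is the same.
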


\begin{proof}
Choose $r_1$ so that for $r\geq r_1$
\begin{align}  \label{e:choicer_1a}
\frac{2 \,(\lambda+1)}{1-2\,(n +\epsilon+2\,\lambda)/r^2} \leq 2 +2\,\lambda+\frac{1}{2} \, \epsilon\, .
\end{align}
For $r\geq r_1$, \eqr{e:choicer_1a} implies that
\begin{align}   \label{e:choicer_1b}
	\frac{2\, r\, (\lambda+1) }{ r^2-2\,(n+\epsilon+2\,\lambda)} = \frac{1}{r} \left( \frac{2\,(\lambda+1) }{1-2\,(n +\epsilon+2\,\lambda)/r^2} \right)   \leq \frac{2 +2\,\lambda+\frac{1}{2} \, \epsilon}{r} \, .
\end{align}
Using the definitions of $f$ and $g$, we get
\begin{align}
-P_{f,\lambda}\,g&=\frac{2-n}{r}+f'-\frac{g}{r}-\frac{r\,\lambda}{g} - \frac{g'}{g}= \frac{2 +\epsilon+2\,\lambda }{r}-\frac{2\,r\,(\lambda+1)}{r^2-2\,(n+\epsilon+2\,\lambda)} \geq  
 \frac{ \epsilon }{2r}  \, .
\end{align}
\end{proof}

Combining the two previous results and Lemma \ref{l:Uonetime}  (to see that $P_{f,\sup V}\,U\geq 0$) gives Theorem \ref{t:main} in the case where $\lambda \leq 0$.  The argument for a general $\lambda$ is similar but a little more involved since
we need a replacement for the second half of Lemma \ref{l:indu}.  We will deal with this in the next subsection.

\subsection{The case $\lambda > 0$}

The next lemma will replace the second half of Lemma \ref{l:indu} when $\lambda > 0$.   

\begin{Lem}   \label{l:generalbound}
Suppose that    $\lambda > 0$ and for $r\geq r_1$ we have that $g,h > 0$, $f'\geq \frac{r}{2}$, $P_{f,\lambda}\,h\geq 0$, $-\frac{\epsilon}{r}\geq P_{f,\lambda}\,g$, and $r\,g'\geq  g$.  If $r_2 \geq r_1$ satisfies $\frac{2-n}{r_2}  + \frac{r_2}{2}      -   r_2 \, \frac{\lambda}{\delta+2\,\lambda} >  \sqrt{\lambda}$ and $ h( r_2)> 2\,\lambda+\delta$, then there exists $R$ such that $h(r)\geq g(r)$ for $r\geq R$.  
\end{Lem}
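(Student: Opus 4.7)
The plan is to produce some $R \geq r_2$ at which $h(R) > g(R)$; once that is achieved, the first half of Lemma \ref{l:indu} applies (since $-\epsilon/r \geq P_{f,\lambda}\,g$ forces $P_{f,\lambda}\,g < 0$) and propagates $h > g$ to all $r \geq R$. I proceed in three steps: propagate the strict lower bound $h > 2\lambda + \delta$ past $r_2$, upgrade this to super-polynomial growth of $h$, and finally compare with $g$ via the ratio $h/g$.

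First, suppose toward a contradiction that $h(r^*) = 2\lambda + \delta$ at some first crossing $r^* > r_2$, so $(\log h)'(r^*) \leq 0$. From $P_{f,\lambda}\,h \geq 0$ and $f' \geq r/2$,
\[
(\log h)'(r^*) \;\geq\; \Bigl[\tfrac{2-n}{r^*} + \tfrac{r^*}{2} - \tfrac{r^*\lambda}{2\lambda + \delta}\Bigr] \;-\; \tfrac{2\lambda + \delta}{r^*}.
\]
The bracket has $r$-derivative $(n-2)/r^2 + \delta/[2(2\lambda+\delta)]$, eventually positive, so the hypothesis at $r_2$ extends to $r^*$ yielding bracket $>\sqrt\lambda$; the same hypothesis also ensures $r_2$ (hence $r^*$) is large enough that $(2\lambda+\delta)/r^* < \sqrt\lambda$. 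Combined, $(\log h)'(r^*) > 0$, a contradiction. Hence $h > 2\lambda + \delta$ on $[r_2, \infty)$.

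Second, from $r\lambda/h \leq r\lambda/(2\lambda+\delta)$, the inequality $P_{f,\lambda}\,h \geq 0$ gives
\[
(\log h)'(r) \;\geq\; c\,r \;-\; \tfrac{h(r)}{r} \;-\; \tfrac{n-2}{r}, \qquad c \;:=\; \tfrac{\delta}{2(2\lambda+\delta)} > 0.
\]
Fix any $\alpha < 2$. If $h(r) \leq r^\alpha$ on some interval $[r_2, R]$, then $h/r \leq r^{\alpha-1}$ and integrating yields $\log h(R) \geq \log h(r_2) + c(R^2 - r_2^2)/2 - R^\alpha/\alpha + O(\log R)$, so $h(R) \geq h(r_2)\,e^{cR^2/4}$ for $R$ large, violating $h(R) \leq R^\alpha$. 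Hence $h(R_\alpha) > R_\alpha^\alpha$ at some $R_\alpha$; since $P_{f,\lambda}(r^\alpha) < 0$ for $r$ large (again from $f' \geq r/2$ and $0 < \alpha < 2$), the first half of Lemma \ref{l:indu} applied with comparison function $r^\alpha$ then gives $h(r) > r^\alpha$ for all $r \geq R_\alpha$.

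Finally, suppose for contradiction $h(r) < g(r)$ on all of $[r_2, \infty)$. A direct computation from the definition of $P_{f,\lambda}$, together with $P_{f,\lambda}\,h - P_{f,\lambda}\,g \geq \epsilon/r$, yields
\[
(\log(h/g))'(r) \;\geq\; \tfrac{\epsilon}{r} \;+\; \tfrac{(g-h)\,(gh - r^2\lambda)}{r\,g\,h}.
\]
Taking $\alpha = 3/2$ in the previous step gives $h(r) > r^{3/2}$ for $r$ large, so under $h < g$ we obtain $gh > h^2 > r^3 > r^2\lambda$ eventually, making the second term non-negative. Integrating $(\log(h/g))' \geq \epsilon/r$ over a tail then forces $\log(h/g)(r) \to +\infty$, contradicting $h < g$. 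Therefore $h(R) \geq g(R)$ at some $R \geq r_2$, and Lemma \ref{l:indu} concludes. The main technical hurdle is the second step: the ODE governing $h$ has two equilibria (near $2\lambda$ and near $r^2/2$), and showing that clearing the lower one by a definite margin $\delta$ drives $h$ past every polynomial scale — rather than stalling at an intermediate scale — is exactly where the strict positivity of $c$, which depends on $\delta > 0$, is indispensable.
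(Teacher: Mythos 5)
Your proof is correct and gets to the same final comparison $(\log(h/g))' \geq \epsilon/r$ once $g\,h > \lambda\,r^2$, but it takes a genuinely different route to establish that $h$ clears the threshold where this comparison closes, and the difference is worth noting. The paper uses $\sqrt{\lambda}\,r$ as the intermediate scale: once $h \geq \sqrt{\lambda}\,r$, the product $g\,h$ automatically exceeds $\lambda r^2$ whenever $h < g$. To keep $h$ above $\sqrt{\lambda}\,r$ throughout the final stretch, the paper needs the differential bound $(h - \sqrt{\lambda}\,r)' \geq \epsilon\sqrt{\lambda}$, and this is exactly where the extra hypothesis $r\,g' \geq g$ is invoked (to pass from $(\log(h/g))' \geq \epsilon/r$ to $(\log h)' \geq (1+\epsilon)/r$). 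The paper then has to remove the $r\,g' \geq g$ hypothesis in Theorem~\ref{t:generalbound} by a two-pass bootstrap. You instead use $r^\alpha$ with a fixed $\alpha \in (1,2)$ as the intermediate scale: since $P_{f,\lambda}(r^\alpha) < 0$ for $r$ large, the maximum principle in Lemma~\ref{l:indu} automatically keeps $h$ above $r^\alpha$ once it gets there, and taking $\alpha = 3/2$ makes $g\,h > h^2 > r^3 > \lambda r^2$ eventually without ever touching $g$. The upshot is that your argument never uses $r\,g' \geq g$ at all, so it effectively proves Theorem~\ref{t:generalbound} directly, at the price of a somewhat heavier integration estimate in the second step to break through each polynomial barrier. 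One caveat that applies equally to both arguments: the deduction that the bracket $\frac{2-n}{r}+\frac{r}{2}-\frac{r\lambda}{\delta+2\lambda}$ stays above $\sqrt{\lambda}$ for all $r \geq r_2$ from the hypothesis at $r_2$ alone uses monotonicity of that expression; this is automatic for $n \geq 2$ but requires $r_2$ large enough when $n = 1$, which should be read into the statement.
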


\begin{proof}
First, if $2\lambda + \delta \leq h(r)  < \sqrt{\lambda} \, r$ for $r \geq r_2$, then   $P_{f,\lambda}\,h\geq 0$  implies that
\begin{align}
(\log h)'(r) \geq \frac{2-n}{r}  + \frac{r}{2}   - \sqrt{\lambda}   -   r \, \frac{\lambda}{\delta+2\,\lambda} > 0  \, .
\end{align}

Second,  since $P_{f,\lambda}\, h\geq 0$ and $-\frac{\epsilon}{r}\geq P_{f,\lambda}\,g$, then
\begin{align}
\left(\log \frac{h}{g}\right)'=P_{f,\lambda}\,h-P_{f,\lambda}\,g+\frac{g-h}{r}-\lambda\,r\,\frac{g-h}{g\,h}\geq \frac{\epsilon}{r}+(g-h)\,\left(\frac{1}{r}-\frac{\lambda\,r}{g\,h}\right)\, .
\end{align}
Therefore, if $\sqrt{\lambda}\,r\leq h(r)< g(r)$, then
\begin{align}
\left(\log \frac{h}{g}\right)'(r)\geq \frac{\epsilon}{r}\, ,
\end{align}
and hence, using also that $rg' \geq g$ (this is the only place where this is used), we have
\begin{align}
(\log h)'(r)\geq \frac{\epsilon}{r}+(\log g)'(r)\geq \frac{1+\epsilon}{r}\, .
\end{align}
Thus, when $\sqrt{\lambda}\,r\leq h(r)< g(r)$, we have that
\begin{align}
(h-\sqrt{\lambda}\,r)'\geq \frac{(1+\epsilon)\,h(r)}{r}-\sqrt{\lambda}\geq (1+\epsilon)\,\sqrt{\lambda}-\sqrt{\lambda}=\epsilon\,\sqrt{\lambda}>0\, .
\end{align}

In both cases, we get that $h$ only leaves each bound at the upper end and we get an upper bound for the length of the stretch where $h$ has this bound.
Finally, it follows from the first part of Lemma \ref{l:indu} that once $h$ is above $g$ it stays above. 
\end{proof}

We can now get rid of the assumption that $r\,g'\geq g$ in Lemma \ref{l:generalbound} to get:

\begin{Thm}   \label{t:generalbound}
Suppose that $\lambda > 0$ and  for $r\geq r_1$ we have that $g,h > 0$,  $f'\geq \frac{r}{2}$, $P_{f,\lambda}\,h\geq 0$, $-\frac{\epsilon}{r}\geq P_{f,\lambda}\,g$, 
then there exists $r_2 > 0$ so that if 
  $  h (s)> 2\,\lambda+\delta$ for some $s \geq r_2$, then  there exists $R$ so that $h(r)\geq g(r)$ for $r\geq R $.  

In particular, $h(r)\geq \frac{r^2}{2}-n-2\,\lambda-\epsilon$ for $r\geq R$.
\end{Thm}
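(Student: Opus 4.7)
The plan is to extract from the proof of Lemma \ref{l:generalbound} the subcases that never use the hypothesis $r\,g'\geq g$, and to replace that hypothesis by direct integration of the cleaner differential inequality $(\log h/g)'\geq \epsilon/r$, which survives for any admissible $g$ once $h\geq\sqrt{\lambda}\,r$.  First, choose $r_2\geq r_1$ large enough that $\frac{2-n}{r_2}+\frac{r_2}{2}-\frac{r_2\,\lambda}{\delta+2\,\lambda}>\sqrt{\lambda}$ and the asymptotic inequalities appearing below are valid.  The barrier argument at the start of the proof of Lemma \ref{l:generalbound}---which uses only $P_{f,\lambda}\,h\geq 0$, $f'\geq r/2$, and the choice of $r_2$, and never mentions $g$---shows that once $h(s)>2\,\lambda+\delta$ for some $s\geq r_2$, one has $h(r)>2\,\lambda+\delta$ for all $r\geq s$.

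With this lower barrier in force, the first case of Lemma \ref{l:generalbound} applies verbatim: whenever $2\,\lambda+\delta\leq h(r)<\sqrt{\lambda}\,r$, the bound $P_{f,\lambda}\,h\geq 0$ and $f'\geq r/2$ yield $(\log h)'(r)\geq c\,r$ for some $c>0$ and all large $r$.  Hence $h$ grows at least like $\e^{c\,r^2/2}$ in this regime and must eventually exceed $\sqrt{\lambda}\,r$; moreover, the same calculation at any point with $h(r)=\sqrt{\lambda}\,r$ shows $(h-\sqrt{\lambda}\,r)'(r)>0$ for $r$ large, so $h\geq\sqrt{\lambda}\,r$ persists thereafter.

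The main new ingredient is handling the region $h\geq \sqrt{\lambda}\,r$ without $r\,g'\geq g$.  Subtracting $P_{f,\lambda}\,g\leq -\epsilon/r$ from $P_{f,\lambda}\,h\geq 0$ yields, exactly as in Lemma \ref{l:generalbound},
\begin{align*}
\left(\log \frac{h}{g}\right)'\geq \frac{\epsilon}{r}+(g-h)\left(\frac{1}{r}-\frac{\lambda\,r}{g\,h}\right).
\end{align*}
In the regime $h<g$ with $h\geq\sqrt{\lambda}\,r$, one has $g\,h\geq h^2\geq \lambda\,r^2$, so the parenthetical factor is non-negative and, since $g-h>0$, the error term is non-negative.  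Hence $(\log h/g)'\geq \epsilon/r$.  Integrating from the first $r_0$ at which $h\geq\sqrt{\lambda}\,r$ up to any subsequent $r$ on which $h<g$ still holds gives $h(r)/g(r)\geq (r/r_0)^{\epsilon}\,h(r_0)/g(r_0)$, which forces $h/g$ to exceed $1$ at some finite $r_*$.  Once $h(r_*)\geq g(r_*)$, the first half of Lemma \ref{l:indu} (which needs only $P\,h\geq 0>P\,g$) preserves $h\geq g$ for all larger $r$, proving the first claim.

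For the \emph{in particular}, apply Lemma \ref{l:chooseg} with parameter $\epsilon$ to produce $g(r)=\frac{r^2}{2}-n-\epsilon-2\,\lambda$ satisfying $-\epsilon/(2r)\geq P_{r^2/4,\,\lambda}\,g$ for large $r$; the monotonicity $f_2'\geq f_1'\Rightarrow P_{f_1,\lambda}\,g\geq P_{f_2,\lambda}\,g$ noted earlier upgrades this to $-\epsilon/(2r)\geq P_{f,\lambda}\,g$ under $f'\geq r/2$.  Applying the first part of the theorem with $\epsilon/2$ in place of $\epsilon$ then gives $h(r)\geq \frac{r^2}{2}-n-2\,\lambda-\epsilon$ for all $r\geq R$.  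The delicate point throughout is the sign of the error term $(g-h)\bigl(1/r-\lambda\,r/(g\,h)\bigr)$: it is precisely the bound $h\geq \sqrt{\lambda}\,r$ secured by the preliminary barrier step that makes this term non-negative and replaces the growth assumption $r\,g'\geq g$ used in Lemma \ref{l:generalbound}.
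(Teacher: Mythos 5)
Your proof is correct, and it takes a genuinely different route from the paper's. The paper first proves the ``in particular'' claim by applying Lemma \ref{l:generalbound} to the specific comparison function $g_0 = \frac{r^2}{2} - n - 2\lambda - \epsilon$, which happens to satisfy $r\,g_0' \geq g_0$; it then observes that the only role of $r\,g'\geq g$ in Lemma \ref{l:generalbound} was to keep $h$ above $\sqrt{\lambda}\,r$ once it crossed, and that this persistence now follows for free from the already-established bound $h \geq g_0$. Your argument instead attacks the persistence issue head on: you notice that at a crossing point $h(r) = \sqrt{\lambda}\,r$, the inequality $P_{f,\lambda}\,h\geq 0$ together with $f'\geq r/2$ alone yields $(h - \sqrt{\lambda}\,r)'(r) \geq \frac{\sqrt{\lambda}\,r^2}{2} - 2\lambda r - \sqrt{\lambda}(n-1) > 0$ for $r$ large, so the barrier is one-sided independently of $g$. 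This lets you run the entire case analysis of Lemma \ref{l:generalbound} with the hypothesis $r\,g'\geq g$ simply deleted, proving the first claim directly and then specializing $g$ to $g_0$ for the second. (One small stylistic nit: you call this ``the same calculation'' as the first case, but the bound on $r\lambda/h$ you use at $h = \sqrt{\lambda}\,r$ is different from the one valid in the regime $2\lambda + \delta \leq h < \sqrt{\lambda}\,r$; both are correct, but they are not literally the same step.) The net effect is that your version both reverses the logical order of the two claims and sharpens Lemma \ref{l:generalbound} by showing the growth hypothesis on $g$ is superfluous, whereas the paper reaches the same endpoint by a bootstrap through the explicit $g_0$.
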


\begin{proof}
We show the second claim first and then use it to show the first claim.    To do that note that if $g_0=\frac{r^2}{2}-n-2\,\lambda-\epsilon$, then $r\,g_0'=r^2\geq g_0$.    Moreover,   Lemma \ref{l:chooseg} gives
$-\frac{\epsilon}{2r} \geq P_{f,\lambda}\,g_0$.  It follows  from Lemma \ref{l:generalbound} that for some $R>0$ and all $r>R$ we have that $h(r)\geq \frac{r^2}{2}-n-2\,\lambda-\epsilon$.

To show the first claim, note that in the proof of Lemma \ref{l:generalbound} the only place where the assumption $r\,g'\geq g$ was used was to show that there exists some $R$ so that once $r\geq R$ and $h(r)\geq \sqrt{\lambda}\,r$ the function $h$ would stay above the function $\sqrt{\lambda}\,r$.   However, this follows from  $h(r)\geq \frac{r^2}{2}-n-2\,\lambda-\epsilon$ for $r$ large enough.
\end{proof}

\begin{proof}[Proof of Theorem \ref{t:main}]
We have already proven the case $\lambda \leq 0$.  The case  $\lambda > 0$   follows from
  Lemma \ref{l:Uonetime}    and  Theorem \ref{t:generalbound}.   
 \end{proof}

 \subsection{Sharpness of  Theorem \ref{t:main}}
 
 The next theorem uses standard solutions of Hermite's equation to show  that
 Theorem \ref{t:main} is sharp even up to the lower order term.
 
 \begin{Thm}	\label{t:sharp}
  For every $n$ and $k \in \ZZ$, there is a  function $v$ on $\RR^n$ with $\cL v = - \frac{k}{2} \, v$ whose frequency $U$ goes to infinity but for every $\epsilon > 0$ has a sequence $r_i$ going to infinity with
 \begin{align}	\label{e:rik}
 	U(r_i) \leq \frac{1}{2} \, r_i^2 - n -k + \epsilon \, .
 \end{align}
 \end{Thm}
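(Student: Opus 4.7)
The plan is to exhibit $v$ as a specific radial solution to the eigenvalue equation and read off the frequency directly from ODE asymptotics. For radial $v(x)=v(|x|)$, the equation $\cL v = -\tfrac{k}{2} v$ reduces to
\begin{align}
v'' + \left( \frac{n-1}{r} - \frac{r}{2} \right) v' + \frac{k}{2} v = 0 \, .
\end{align}
This is a classical Hermite/Kummer equation with a two-dimensional solution space, and its Wronskian is $W(r) = C r^{1-n} e^{r^2/4}$. An ansatz $v \sim r^{\alpha} e^{\beta r^2/4}$, together with dominant-balance matching in the ODE, forces either $\beta = 0$ with $\alpha = k$ (the classical polynomial branch when $k$ is a non-negative integer) or $\beta = 1$ with $\alpha = -(n+k)$; the two exponents are consistent with the Wronskian identity.

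Take $v$ to be the fast-growing solution. A standard asymptotic argument — for instance, a Liouville--Green transformation or bootstrapping the substitution $v(r) = r^{-(n+k)} e^{r^2/4}(1+\phi(r))$ through variation of parameters — gives
\begin{align}
v(r) = r^{-(n+k)}\, e^{r^2/4}\,(1+o(1)) \, ,\qquad \frac{v'(r)}{v(r)} = \frac{r}{2} - \frac{n+k}{r} + o\!\left(\frac{1}{r}\right) .
\end{align}
Because $v$ is radial, $I(r) = \omega_{n-1}\, v(r)^2$ and $D(r) = \omega_{n-1}\, r\, v(r)\, v'(r)$, so by \eqr{e:defU} the frequency is $U(r) = D/I = r\, v'(r)/v(r)$. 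Plugging in the asymptotics gives
\begin{align}
U(r) = \frac{r^2}{2} - (n+k) + o(1) \qquad \text{as } r \to \infty.
\end{align}
In particular $U(r) \to \infty$, and for every $\epsilon > 0$ the inequality $U(r) \leq \tfrac{1}{2} r^2 - n - k + \epsilon$ holds for all sufficiently large $r$, which trivially produces a sequence $r_i \to \infty$ satisfying \eqr{e:rik}.

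The one technical point is pinning down the subleading exponent $-(n+k)$: the leading balance alone would only give the $\frac{r^2}{2}$ term of $U$, whereas the constant $-n-k$ is exactly what certifies sharpness of Theorem~\ref{t:main}. Since the ODE has an irregular singularity at infinity of the simplest Hermite type, a clean alternative is the change of variables $w(s) = v(\sqrt{2s})$, which converts the equation into a Tricomi confluent hypergeometric equation whose $s \to \infty$ asymptotic expansion is classical and yields the claimed exponent immediately. The argument works uniformly for all $k \in \ZZ$ (positive, zero, or negative); no separate case analysis is required.
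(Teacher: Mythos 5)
Your approach is correct but genuinely different from the paper's. The paper constructs a separable solution $v_k(x) = u_k(x_1)\,u_0(x_2)\cdots u_0(x_n)$ out of one-dimensional Hermite-type solutions, whereas you take the radial solution of $v'' + \bigl(\tfrac{n-1}{r} - \tfrac{r}{2}\bigr) v' + \tfrac{k}{2} v = 0$ and read off the frequency from its confluent-hypergeometric asymptotics. For a radial function, $I(r) = c_n\, v(r)^2$ and $D(r) = c_n\, r\, v(r) v'(r)$, so $U(r) = r\, v'(r)/v(r)$, and the two-term asymptotic $v(r) \sim c\, r^{-(n+k)} e^{r^2/4}$ gives $U(r) = \tfrac{r^2}{2} - (n+k) + o(1)$ directly. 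Your dominant-balance identification of the exponent $-(n+k)$ is correct; the Wronskian $W(r) = C\, r^{1-n} e^{r^2/4}$ and the variation-of-parameters integral $\int r^{1-n-2k} e^{r^2/4}\, dr \sim 2 r^{-n-2k} e^{r^2/4}$ indeed give $v_{\mathrm{fast}} \sim C' r^{-n-k} e^{r^2/4}$ even when the slow branch is a terminating (polynomial) Kummer function. The only cosmetic slip is that the Kummer normal form comes from $t = r^2/4$, not $s = r^2/2$; your $w(s) = v(\sqrt{2s})$ needs a further trivial rescaling, but this affects nothing.

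What each approach buys: the radial route keeps all the growth on a single ODE and the frequency is literally $r\, v'/v$, so the constant $-(n+k)$ is visible with no spherical integration. The paper's product route, by contrast, requires controlling $I(r) = r^{1-n}\int_{\partial B_r} v_k^2$ for a non-radial function, and the control there is delicate. In fact you should be aware that the paper's stated pointwise bound $|v_k(x)| \leq C\, e^{|x|^2/4} |x|^{-k-n}$ for the product does not appear to hold for $n \geq 2$: on $\partial B_r$ the product is maximized not on the diagonal but near a coordinate hyperplane (e.g.\ at $(x_1,x_2) = (\sqrt{r^2 - a^2},\, a)$ with $a = O(1)$ one gets $|v_0| \sim r^{-1} e^{r^2/4}$ rather than $r^{-2} e^{r^2/4}$ when $n = 2$, $k = 0$), which inflates $I(r)$ by a factor of roughly $r^{n-1}$ and yields only $U \to \tfrac{r^2}{2} - \tfrac{n+2}{2} - k$ rather than $\tfrac{r^2}{2} - n - k$. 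The radial solution is immune to this angular-concentration issue and does achieve the sharp constant, so your approach is not merely an alternative but appears to be the correct way to nail the lower-order term $-n$ in all dimensions. To make the argument airtight you should still replace the phrase "a standard asymptotic argument gives" with an actual error bound, e.g.\ by writing $v(r) = r^{-(n+k)} e^{r^2/4}(1 + \phi(r))$, deriving a first-order linear equation for $\phi$, and showing $\phi = O(r^{-2})$ by an integrating-factor estimate; this is routine but needs to be said once $k$ and $n$ range over all integers and all dimensions.
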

 
 The second order ODE $\cL\, u = 0$ on $\RR$, where $f(r)=\frac{r^2}{4}$, has a two-parameter family of solutions.  The first solution is a constant.  The  second, $u_0(x)$,  can be normalized to have $u_0(0)=0$ and $u_0'(0)=1$.  The next lemma shows that $I(r) \approx \frac{1}{r}\, \e^{ \frac{r^2}{4}}$.
  
 \begin{Lem}	\label{l:ODE}
 The function $u_0$ is odd,  $u_0'(x) = \e^{ \frac{x^2}{4} }$, and   for $ x \geq 2$  
 \begin{align}
 	  \e^{ \frac{x^2}{4} }     \leq x\, u_0(x) \leq   6\, \e^{ \frac{x^2}{4}} \, .	\label{e:u0}
 \end{align}
 Moreover, there are functions $u_k$ for all $k \in \ZZ$ with $\cL u_k = - \frac{k}{2} \, u_k$, so that $u_k' = u_{k-1}$  and, furthermore,  there are constants $c_k$ so that 
  \begin{align}
 	    |u(x)| \leq   c_k \, |x|^{-k-1} \, \e^{ \frac{x^2}{4}} {\text{ for }} 1 \leq |x| \, .	\label{e:uk}
 \end{align}
 \end{Lem}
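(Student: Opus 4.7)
I would proceed in four steps.

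\emph{Step 1 (formula for $u_0$).} The equation $\cL u_0 = u_0'' - \tfrac{x}{2} u_0' = 0$ is first order in $u_0'$, so $(\log u_0')' = x/2$ and the normalization $u_0'(0) = 1$ give $u_0'(x) = \e^{x^2/4}$. Since $u_0'$ is even and $u_0(0) = 0$, $u_0$ is odd.

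\emph{Step 2 (bounds on $u_0$).} For $x \geq 2$ I compare $u_0$ with $\alpha\, x^{-1}\,\e^{x^2/4}$. Setting $\phi_\alpha(x) = u_0(x) - \alpha\, x^{-1}\,\e^{x^2/4}$, a direct calculation gives
\begin{align*}
\phi_\alpha'(x) = \e^{x^2/4}\,\bigl(1 - \tfrac{\alpha}{2} + \tfrac{\alpha}{x^2}\bigr).
\end{align*}
Taking $\alpha = 1$, the derivative is strictly positive, so $\phi_1$ is increasing; since $\phi_1(2) = u_0(2) - \e/2 > 0$ (as $u_0(2) \geq 2$), one obtains $x u_0(x) \geq \e^{x^2/4}$. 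Taking $\alpha = 4$, the derivative is $\leq 0$ on $[2,\infty)$, and $\phi_4(2) = u_0(2) - 2\e \leq 0$ (using $\e^{t^2/4} \leq \e$ on $[0,2]$), which yields $x u_0(x) \leq 4\,\e^{x^2/4} \leq 6\,\e^{x^2/4}$.

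\emph{Step 3 (construction of $u_k$).} For $k \leq -1$ set $u_k = u_{k+1}'$. Then $u_{-1} = \e^{x^2/4}$, $u_{-2} = \tfrac{x}{2}\e^{x^2/4}$, and inductively $u_k = p_k(x)\,\e^{x^2/4}$ with $p_k$ a polynomial of degree $-k-1$; $\cL u_k = -\tfrac{k}{2} u_k$ is then a direct calculation. For $k \geq 1$ define
\begin{align*}
u_k = \tfrac{x}{k}\, u_{k-1} - \tfrac{2}{k}\, u_{k-2},
\end{align*}
the unique relation forced by combining $u_k' = u_{k-1}$ with the eigenvalue equation. Using $\cL u_{k-1} = -\tfrac{k-1}{2} u_{k-1}$ and $u_{k-1}' = u_{k-2}$ as inductive hypotheses, a short computation verifies both $u_k' = u_{k-1}$ and $\cL u_k = -\tfrac{k}{2} u_k$.

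\emph{Step 4 (growth bound).} For $k \leq 0$ the bound $|u_k| \leq c_k |x|^{-k-1}\,\e^{x^2/4}$ is immediate from Step 2 and the polynomial form. For $k \geq 1$ I induct on $k$: from $u_k(x) = u_k(R) + \int_R^x u_{k-1}(t)\, dt$ and $|u_{k-1}(t)| \leq c_{k-1}\, t^{-k}\,\e^{t^2/4}$, it suffices to estimate $\int_R^x t^{-k}\,\e^{t^2/4}\, dt$. The identity
\begin{align*}
\tfrac{d}{dt}\bigl(2\, t^{-k-1}\,\e^{t^2/4}\bigr) = t^{-k}\,\e^{t^2/4} - 2(k+1)\, t^{-k-2}\,\e^{t^2/4}
\end{align*}
shows that for $t \geq R_k := \sqrt{4(k+1)}$ the left-hand side is at least $\tfrac{1}{2} t^{-k}\,\e^{t^2/4}$, giving $\int_{R_k}^x t^{-k}\,\e^{t^2/4}\, dt \leq 4\, x^{-k-1}\,\e^{x^2/4}$. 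This yields $|u_k(x)| \leq C_k\, x^{-k-1}\,\e^{x^2/4}$ for $x \geq R_k$, and extending to $|x| \in [1, R_k]$ by continuity (where $x^{-k-1}\,\e^{x^2/4}$ is bounded below away from zero) completes the induction. The main obstacle is exactly this sharp decay for $k \geq 1$: the three-term recursion applied naively yields only $|x|^{-k+1}\,\e^{x^2/4}$, because the leading-order terms of $\tfrac{x}{k}u_{k-1}$ and $\tfrac{2}{k}u_{k-2}$ cancel. The antiderivative form together with the integration-by-parts identity above is what extracts the correct power $|x|^{-k-1}$ directly.
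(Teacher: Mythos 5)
Your proof is correct, and it differs from the paper's in two places worth noting. For the two-sided bound on $u_0$ (Step 2), the paper directly estimates $u_0(r)=\int_0^r \e^{t^2/4}\,dt$ by splitting the integral into pieces, whereas you compare $u_0$ against the barrier $\alpha x^{-1}\e^{x^2/4}$ via an ODE/monotonicity argument: that approach is cleaner and incidentally yields the sharper constant $4$ in place of $6$. For the negative $k$ range, the paper invokes elliptic estimates on balls of radius $|x|^{-1}$ to bound the successive derivatives of $u_0$; you instead observe that $u_k = p_k(x)\e^{x^2/4}$ with $p_k$ a polynomial of degree $-k-1$ and read off the bound directly, which is more elementary. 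For $k\geq 1$ your three-term recursion $u_k=\tfrac{x}{k}u_{k-1}-\tfrac{2}{k}u_{k-2}$ produces the same functions as the paper's construction $u_{k+1}=\int_0^x u_k + d_{k+1}$ (uniqueness follows since $u_k'=u_{k-1}$ and $\cL u_k=-\tfrac{k}{2}u_k$ determine $u_k$ up to an additive constant that must vanish when $k\neq 0$), and your Step 4 makes explicit what the paper dispatches with ``using integration by parts, it is easy to see'': the identity $\tfrac{d}{dt}\bigl(2t^{-k-1}\e^{t^2/4}\bigr)=t^{-k}\e^{t^2/4}-2(k+1)t^{-k-2}\e^{t^2/4}$ is precisely the integration by parts the authors have in mind, and your observation that the naive use of the three-term recursion loses two powers of $|x|$ due to cancellation correctly identifies why the antiderivative representation is needed.
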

 
 \begin{proof}
 Since $  \cL u_0 = u_0'' - \frac{x}{2} \, u_0' = \e^{ \frac{x^2}{4} } \, \left( u_0' \e^{-\frac{x^2}{4} } \right)'$, we see that $\left( u_0' \e^{-\frac{x^2}{4} } \right)$ is constant.  Using the normalization $u_0'(0)=1$,  the constant is one.  For the lower bound given $r>2$, we have
 \begin{align}
 	r\, u_0(r) = r \, \int_0^r \e^{ \frac{x^2}{4} } \, dx \geq \int_0^r x\, \e^{ \frac{x^2}{4} } \, dx = 2 \, \e^{ \frac{x^2}{4} }\big|_0^r = 2 \, \e^{ \frac{r^2}{4} } - 2 \geq  \e^{ \frac{r^2}{4} } \, .
 \end{align}
 To get the upper bound, we divide the integral into three parts
  \begin{align}
  	u_0(r)  &\leq  \int_0^1 \e^{ \frac{x^2}{4} } \, dx +  \int_1^{r/2} x\, \e^{ \frac{x^2}{4} } \, dx + 
 	   \frac{2}{r} \, \int_{r/2}^r x\, \e^{ \frac{x^2}{4} } \, dx \leq  \e^{ \frac{1}{4} } + 2 \, \left( \e^{ \frac{r^2}{16}} -  \e^{ \frac{1}{4} } \right) + \frac{4}{r} \e^{ \frac{r^2}{4}} \notag \\
	   &\leq 2 \,  \e^{ \frac{r^2}{16}}  + \frac{4}{r} \e^{ \frac{r^2}{4}}
	  \leq  \frac{6}{r} \e^{ \frac{r^2}{4}}  \, ,
 \end{align}
 where the last inequality used  $r^{-1} \,\e^{ \frac{3r^2}{16}}$ is increasing for $r \geq 2$ and $ \e^{ \frac{r^2}{16}} \leq \frac{1}{r} \e^{ \frac{r^2}{4}}$ at $r =2$.   
 
We construct the $u_k$'s for $k$ inductively for $k< 0$ by defining $u_k = u_{k+1}'$.   Using the bound \eqr{e:u0} and elliptic estimates on balls of radius $|x|^{-1}$ gives the bound \eqr{e:uk}.

 For $k\geq 0$, we inductively define
 \begin{align}
 	u_{k+1} (x) = \int_0^x u_k (s) \, ds + d_{k+1} \, , 
 \end{align}
 where the constant $d_{k+1}$ is chosen to make $\cL u_{k+1} = - \frac{k+1}{2} \, u_{k+1}$.      To see that 
 we can choose $d_{k+1}$ so that it satisfies the equation, note that
 \begin{align}
 	\left( \cL u_{k+1}  + \frac{k+1}{2} \, u_{k+1}\right)' &= \left( u_{k}' - \frac{x}{2} \, u_{k} + \frac{k+1}{2} \, u_{k+1}\right)' \notag \\
	&= u_{k}'' - \frac{x}{2} \, u_{k}' - \frac{1}{2} \, u_k + \frac{k+1}{2} \, u_{k} = 0 \, .
 \end{align}
  Using integration by parts, it is easy to see that $u_{k+1}$  grows one degree slower than $u_k$ and, thus, satisfies \eqr{e:uk}.    
  \end{proof}
  
  \begin{proof}[Proof of Theorem \ref{t:sharp}]
  It suffices to construct $v_k$ with $\cL v_k =- \frac{k}{2} \, v_k$ where $v_k$ grows at least exponentially  and has (for all $x$ sufficiently large)
  \begin{align}	\label{e:suffice}
  	|v_k| \leq C \, \e^{ \frac{|x|^2}{2} } \, |x|^{-k-n} \, .
  \end{align}
  This is because the failure of \eqr{e:rik} for all $r_i$ larger than some fixed $R$ implies  $\e^{ \frac{|x|^2}{2} } \, |x|^{\epsilon-k-n}$ growth $r \geq R$, contradicting \eqr{e:suffice}.
  
  The function $u_k$  from Lemma \ref{l:ODE} satisfies \eqr{e:suffice} for $n=1$.  For $n> 1$, we set
  \begin{align}	\label{e:product}
  	v_k(x_1 , \dots , x_n) = u_k(x_1) u_0(x_2) \dots u_0(x_n) \, .
  \end{align}
\end{proof}

  \section{Lower bound for $U'$}
  
In this section, we specialize to the Ornstein-Uhlenbeck operator $\cL$   where $f(r)=\frac{r^2}{4}$.     In this case,
$\cL\,f=\frac{n}{2}-f$,   $|\nabla f|^2=f$, and the Hessian of $f$ is diagonal with $f_{ij} = \frac{1}{2} \, \delta_{ij}$.
  
  \vskip1mm
  The next lemma is a drift version of the classical Rellich identity that is used to prove monotonicity  of Almgren's frequency for harmonic functions.
  
  \begin{Lem}	\label{l:poho}
Suppose that $\cL\,u+V\,u=0$ on $\RR^n$.   Given $r> 0$, we have
   \begin{align}	\label{e:poho}
 	2 \, r \int_{\partial B_r} u_r^2 - r\int_{\partial B_r}\left( |\nabla u|^2 - V\, u^2 \right) &=(2-n)\,\e^{\frac{r^2}{4}}\int_{B_r} |\nabla u|^2\,\e^{-f} +
	2 \, \e^{\frac{r^2}{4}} \, \int_{B_r} |\nabla u|^2 \, f\, \e^{-f} \notag \\
	&+2\, \e^{\frac{r^2}{4}} \, \int_{B_r}V\,u^2 \,\left( \frac{n}{2} -  f \right) \, \e^{-f} +2\,\e^{\frac{r^2}{4}}\int_{B_r}u^2\,\langle \nabla V,\nabla f \rangle\,\e^{-f}\, .
 \end{align}
  \end{Lem}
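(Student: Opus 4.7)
\smallskip

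The plan is to derive the identity by applying the divergence theorem to a carefully chosen weighted vector field on $B_r$, in the spirit of the classical Rellich/Pohozaev argument, and then repeatedly integrating by parts to move derivatives off $u$ and onto $V$. The natural vector field to try is $X = \nabla f$, since $X \cdot \nu = f'(r) = \tfrac{r}{2}$ on $\partial B_r$, $\langle X, \nabla u\rangle = \tfrac{r}{2}\,u_r$ on $\partial B_r$, and $\Hess f = \tfrac12 I$ is particularly nice.

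I would start from the pointwise identity
\begin{align}
\dv\bigl(\langle X,\nabla u\rangle \nabla u \, \e^{-f}\bigr) = \langle X,\nabla u\rangle \,\e^{-f}\,\cL\,u + \tfrac{1}{2}\Hess f (\nabla u,\nabla u) \e^{-f} \cdot 2 + \tfrac12\langle X, \nabla |\nabla u|^2\rangle \e^{-f},
\end{align}
rewritten using $\dv(\e^{-f}\nabla u) = \e^{-f}\cL u$ and the product rule. Integrating over $B_r$, the left side becomes the boundary term $\int_{\partial B_r}\langle X,\nabla u\rangle u_r\,\e^{-f}= \e^{-r^2/4}\tfrac{r}{2}\int_{\partial B_r}u_r^2$. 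On the right, I would plug in $\cL u = -V u$, use $\Hess f(\nabla u,\nabla u) = \tfrac12|\nabla u|^2$, and integrate by parts the term $\tfrac12\langle \nabla f,\nabla |\nabla u|^2\rangle\e^{-f}$, using $\dv(\e^{-f}\nabla f)=\e^{-f}\,\cL f = \e^{-f}(\tfrac{n}{2}-f)$. This produces a boundary term $\tfrac14 r\,\e^{-r^2/4}\int_{\partial B_r}|\nabla u|^2$ and interior terms $-\tfrac12\int|\nabla u|^2\e^{-f}$ and $\tfrac12\int|\nabla u|^2(\tfrac{n}{2}-f)\e^{-f}$.

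After collecting and multiplying through by $2\,\e^{r^2/4}$, every term in the target appears except for the $V$ contribution: I still carry the volume integral $-4\,\e^{r^2/4}\int_{B_r} V u \langle \nabla f,\nabla u\rangle \,\e^{-f}$. To finish, I would rewrite this as $-2\,\e^{r^2/4}\int_{B_r} V \langle \nabla f, \nabla u^2\rangle\,\e^{-f}$, and integrate by parts once more. The divergence theorem yields the boundary piece $-r\int_{\partial B_r}V u^2$, the interior contribution $2\,\e^{r^2/4}\int_{B_r}V u^2(\tfrac{n}{2}-f)\e^{-f}$ (from $\dv(\e^{-f}\nabla f)$), and finally $2\,\e^{r^2/4}\int_{B_r} u^2\langle \nabla V,\nabla f\rangle \e^{-f}$ (from moving the gradient off $V$). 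Combining with the previous step gives exactly \eqr{e:poho}.

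The only subtlety is bookkeeping: two successive integrations by parts, the first with the vector field $\langle X,\nabla u\rangle \nabla u$ and the second with $V u^2 \nabla f$, both use the identity $\dv(\e^{-f}\nabla f)=\e^{-f}(\tfrac{n}{2}-f)$, and the special value $\Hess f = \tfrac12 I$ is what makes the $|\nabla u|^2$ coefficients come out right. No regularity issues arise since we may assume $u$ smooth and integrate on a compact ball. This is the standard Pohozaev calculation adapted to the Gaussian weight, with the $V$-term contributing both a boundary piece $r\int V u^2$ and the two interior pieces on the right side of \eqr{e:poho}.
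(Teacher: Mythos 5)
Your approach is essentially the same as the paper's. The paper applies the weighted divergence theorem to the single vector field $\langle\nabla f,\nabla u\rangle\nabla u - \tfrac12|\nabla u|^2\nabla f$, whereas you split it into $\langle\nabla f,\nabla u\rangle\nabla u$ followed by a separate integration by parts of $\tfrac12\langle\nabla f,\nabla|\nabla u|^2\rangle\,\e^{-f}$; these are the same computation, and your treatment of the $V$-term via $\dv_f\bigl(V u^2\nabla f\bigr)$ matches the paper's \eqr{e:usecLf}. However, the intermediate bookkeeping has a couple of slips: the Hessian term contributes $+\tfrac12\int|\nabla u|^2\,\e^{-f}$ (not $-\tfrac12$), and the integration by parts produces the interior term $-\tfrac12\int|\nabla u|^2\left(\tfrac{n}{2}-f\right)\e^{-f}$ (not $+\tfrac12$); with your stated signs the first two terms on the right of \eqr{e:poho} would come out as $(n-2)$ and $-2f$ instead of $(2-n)$ and $+2f$. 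Likewise, to turn the boundary term $\tfrac{r}{2}\,\e^{-r^2/4}\int_{\partial B_r}u_r^2$ into the $2r\int_{\partial B_r}u_r^2$ in \eqr{e:poho} you must multiply by $4\,\e^{r^2/4}$, not $2\,\e^{r^2/4}$. These are transcription slips rather than gaps in the idea: with the correct signs and factor the computation does give the lemma, exactly as in the paper.
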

 
 \begin{proof}
Using that $f_{ij} = \frac{1}{2} \, \delta_{ij}$, the divergence of $\langle \nabla f , \nabla u \rangle \nabla u - \frac{1}{2} |\nabla u|^2 \, \nabla f$ is
 \begin{align}
    (f_i u_i  u_j - \frac{1}{2} u_i^2 f_j )_j &=  f_i u_i u_{jj} + f_i u_{ij} u_j + f_{ij} u_i u_j -\frac{1}{2} \,  u_i^2 f_{jj} -  u_{ij} u_i f_j \notag \\
    &=  f_i u_i u_{jj}   + \frac{2-n}{4} \,  u_i^2  \, .
 \end{align}
 In particular, since $\dv_f\, X\equiv  \e^f \, \dv \left( \e^{-f} \, X \right)=\dv\, X-\langle \nabla f,X\rangle$, we see that
 \begin{align}
 	\dv_f &\left( \langle \nabla f , \nabla u \rangle \nabla u - \frac{1}{2} |\nabla u|^2 \, \nabla f \right) =  \langle \nabla f , \nabla u \rangle \cL u + \frac{2-n}{4} |\nabla u|^2 + \frac{1}{2} \, |\nabla u|^2 \,f \notag\, ,
 \end{align}
where the equality also used that $|\nabla f|^2=f$.  The divergence theorem gives
 \begin{align}	 
 	2 \, r \int_{\partial B_r} u_r^2 - r\int_{\partial B_r} |\nabla u|^2 &=(2-n)\,\e^{\frac{r^2}{4}}\int_{B_r} |\nabla u|^2\,\e^{-f} +
	2 \, \e^{\frac{r^2}{4}} \, \int_{B_r} |\nabla u|^2 \, f\, \e^{-f} \notag \\
	&+4\,  \e^{\frac{r^2}{4}} \, \int_{B_r}  \langle \nabla f , \nabla u \rangle \cL u \, \e^{-f}\, .
 \end{align}
 The lemma follows from this and taking $\dv_f$ of $\frac{1}{2} V \, u^2 \, \nabla f$ to get
 \begin{align}	\label{e:usecLf}
 \int_{B_r} \langle \nabla f,\nabla u\rangle\,\cL\,u \,\e^{-f}&=-\frac{1}{2}\int_{B_r}V\,\langle \nabla f,\nabla u^2\rangle\,\e^{-f}\notag\\
 &=\frac{1}{2}\int_{B_r}V\,u^2\,\left(\frac{n}{2}-f\right)\,\e^{-f}-\frac{r}{4} \e^{ - \frac{r^2}{4} } \, \int_{\partial B_r}V\,u^2+ \frac{1}{2} \, \int_{B_r}u^2\,\langle \nabla V,\nabla f \rangle\,\e^{-f}\, . 
 \end{align} 
 \end{proof}

We specialize next to drift eigenfunctions, i.e., where $V = \lambda $ is constant.

 \begin{Lem}   \label{l:firstlowerD'}
 If $\cL u = -\lambda \, u $ on $\RR^n$, then 
 \begin{align}   \label{e:firstlowerD'}
D'(r)\geq  \frac{r}{2} \, D +  2\,\frac{U\,D}{r}  - 2 \, \e^{\frac{r^2}{4}} \, r^{1-n} \int_{B_r}\left( |\nabla u|^2 - \lambda u^2 \right) \, f\, \e^{-f} 
-2 \, \lambda \e^{\frac{r^2}{4}} \, r^{1-n} \int_{B_r}  u^2  \, \e^{-f}  \, ,
 \end{align}
 \end{Lem}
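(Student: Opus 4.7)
\textbf{Proof plan for Lemma \ref{l:firstlowerD'}.}  The plan is to combine three ingredients already available in the paper: the formula for $D'$ from Lemma \ref{l:IandD}, the drift Rellich identity from Lemma \ref{l:poho}, and the Cauchy-Schwarz bound $r^{2-n}\int_{\partial B_r}u_r^2\geq UD/r$ that was used in the proof of Lemma \ref{l:Uonetime}.  The output of Rellich gets plugged into the boundary term appearing in $D'$, after which the interior $|\nabla u|^2$ integrals are reorganized to group with $-\lambda u^2$ terms so that the factor $(|\nabla u|^2-\lambda u^2)$ appears as in the conclusion.

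First I would specialize Lemma \ref{l:IandD} to $V=\lambda$ and $f(r)=r^2/4$ to obtain
\begin{align}
D'(r)=\frac{2-n}{r}\,D+\frac{r}{2}\,D+r^{2-n}\int_{\partial B_r}\left(|\nabla u|^2-\lambda u^2\right).
\end{align}
Next, since $V=\lambda$ is constant the last term of \eqr{e:poho} drops, and solving \eqr{e:poho} for the boundary integral of $|\nabla u|^2-\lambda u^2$ gives
\begin{align}
r\int_{\partial B_r}\left(|\nabla u|^2-\lambda u^2\right)&=2r\int_{\partial B_r}u_r^2-(2-n)\,\e^{r^2/4}\int_{B_r}|\nabla u|^2\,\e^{-f}\notag\\
&\quad -2\,\e^{r^2/4}\int_{B_r}|\nabla u|^2\,f\,\e^{-f}-2\lambda\,\e^{r^2/4}\int_{B_r}u^2\left(\tfrac{n}{2}-f\right)\e^{-f}.
\end{align}
Dividing by $r^{n-1}$ and using the Cauchy-Schwarz bound $2r^{2-n}\int_{\partial B_r}u_r^2\geq 2UD/r$ then yields a lower bound for the boundary term in $D'$.

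The remaining (purely bookkeeping) step is to absorb the cross-terms.  I would write $|\nabla u|^2=(|\nabla u|^2-\lambda u^2)+\lambda u^2$ in both of the interior integrals carrying $|\nabla u|^2$.  The term $-(2-n)r^{1-n}\e^{r^2/4}\int_{B_r}(|\nabla u|^2-\lambda u^2)\e^{-f}$ produced this way equals $-\frac{2-n}{r}D$ by the definition of $D$, which exactly cancels the $\frac{2-n}{r}D$ coming from Lemma \ref{l:IandD}.  What is left over from the split of the weighted integral $\int|\nabla u|^2 f\,\e^{-f}$ combines with the $\lambda u^2(n/2-f)$ term: the $f$-pieces cancel and the remainder is $-n\lambda r^{1-n}\e^{r^2/4}\int_{B_r}u^2\,\e^{-f}$, which together with the $-(2-n)\lambda r^{1-n}\e^{r^2/4}\int_{B_r}u^2\,\e^{-f}$ from the earlier split sums to $-2\lambda r^{1-n}\e^{r^2/4}\int_{B_r}u^2\,\e^{-f}$, matching the last term in \eqr{e:firstlowerD'}.

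I do not anticipate a serious obstacle here beyond careful algebra; the only slightly subtle point is recognizing that each $|\nabla u|^2$ interior integral must be split into $(|\nabla u|^2-\lambda u^2)+\lambda u^2$ so that the conclusion is stated in the natural energy form and the remaining $\lambda u^2$ pieces fit together into a single $-2\lambda r^{1-n}\e^{r^2/4}\int_{B_r}u^2\,\e^{-f}$ term rather than leaving orphaned $n/2$ and $(2-n)$ coefficients.
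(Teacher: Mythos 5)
Your proposal is correct and follows essentially the same route as the paper: use Lemma \ref{l:IandD} for $D'$, substitute the Rellich identity of Lemma \ref{l:poho} (with $\nabla V=0$) for the boundary term, and finish with the Cauchy-Schwarz bound \eqr{e:CS}. The only cosmetic difference is that you make explicit the split $|\nabla u|^2=(|\nabla u|^2-\lambda u^2)+\lambda u^2$, whereas the paper writes the right side of \eqr{e:poho} directly in the grouped form $(2-n)\tfrac{D}{r}+2\e^{r^2/4}r^{1-n}\int_{B_r}(|\nabla u|^2-\lambda u^2)f\,\e^{-f}+2\lambda\,\e^{r^2/4}r^{1-n}\int_{B_r}u^2\,\e^{-f}$; your bookkeeping verifying the cancellation of the $\tfrac{2-n}{r}D$ term and the $\int u^2 f\,\e^{-f}$ pieces is exactly what that grouping encodes.
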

 
 \begin{proof}
Multiplying Lemma \ref{l:poho} by $r^{1-n}$ 
gives that
   \begin{align}	
 	2 \, r^{2-n} \int_{\partial B_r} u_r^2 - r^{2-n} \int_{\partial B_r} \left( |\nabla u|^2 - \lambda u^2 \right) &=(2-n)\, \frac{D}{r} +
	2 \, \e^{\frac{r^2}{4}} \, r^{1-n} \int_{B_r}\left( |\nabla u|^2 -\lambda u^2 \right) \, f\, \e^{-f} \notag \\
	&+ 2\lambda  \, \e^{\frac{r^2}{4}} \, r^{1-n} \int_{B_r}  u^2  \, \e^{-f} \, .
 \end{align}
 Using this in the formula for $D'$ from Lemma \ref{l:IandD}  gives
  \begin{align}
D'(r)&=  \frac{2-n}{r} \, D + \frac{r}{2} \, D + r^{2-n} \,   \int_{\partial B_r}  \left( |\nabla u|^2 - \lambda u^2 \right) \notag \\
&=  \frac{r}{2} \, D +  2 \, r^{2-n} \int_{\partial B_r} u_r^2 - 2 \, \e^{\frac{r^2}{4}} \, r^{1-n} \int_{B_r}\left( |\nabla u|^2 - \lambda u^2 \right)  f\, \e^{-f} 
-2 \, \lambda \e^{\frac{r^2}{4}} \, r^{1-n} \int_{B_r}  u^2  \, \e^{-f}   \, .
\end{align}
The lemma follows from this since $  r^{2-n} \int_{\partial B_r} u_r^2  \geq  \frac{ UD}{r}$ by   \eqr{e:CS}.  \end{proof}
  
  The next corollary  shows that $U$ is monotone for drift-harmonic functions.
  
  \begin{Cor}
 If $\cL u = 0$ on $\RR^n$, then 
    \begin{align}
 (\log U)' & \geq  
  \frac{2\, \int_{B_r} |\nabla u|^2 \, \left( \frac{r^2}{4} - f \right)\, \e^{-f} }{ r\, \int_{B_r} |\nabla u|^2 \,  \e^{-f}} \geq 0 
  \, .
 \end{align}
 \end{Cor}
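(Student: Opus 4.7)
The plan is to combine Lemma \ref{l:firstlowerD'} (specialized to $\lambda = 0$) with the identity $(\log U)' = \frac{D'}{D} - \frac{2U}{r}$ coming from Lemma \ref{l:IandD}, and then rewrite the resulting estimate as an integral quotient over $B_r$.

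First I would set $\lambda = 0$ in Lemma \ref{l:firstlowerD'} to obtain
\begin{align}
D'(r) \geq \tfrac{r}{2}\, D + \tfrac{2\,U\,D}{r} - 2\,\e^{\frac{r^2}{4}}\, r^{1-n} \int_{B_r} |\nabla u|^2 \, f \, \e^{-f}.
\end{align}
Dividing by $D > 0$ and subtracting $\frac{2U}{r}$ from both sides yields
\begin{align}
(\log U)'(r) = \frac{D'}{D} - \frac{2U}{r} \geq \frac{r}{2} - \frac{2\,\e^{\frac{r^2}{4}}\, r^{1-n} \int_{B_r} |\nabla u|^2 \, f \, \e^{-f}}{D}.
\end{align}

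Next I would substitute $D = r^{2-n}\,\e^{\frac{r^2}{4}} \int_{B_r} |\nabla u|^2\, \e^{-f}$ (from \eqref{e:defD} with $V = 0$), which cancels the exponential and volume factors and gives
\begin{align}
(\log U)'(r) \geq \frac{r}{2} - \frac{2\int_{B_r} |\nabla u|^2\, f\, \e^{-f}}{r \int_{B_r} |\nabla u|^2\, \e^{-f}}.
\end{align}
Finally, I would rewrite $\frac{r}{2}$ as $\frac{2 \int_{B_r} |\nabla u|^2\, \frac{r^2}{4}\, \e^{-f}}{r \int_{B_r} |\nabla u|^2\, \e^{-f}}$ and combine the two terms under a single integral, producing the claimed lower bound
\begin{align}
(\log U)'(r) \geq \frac{2 \int_{B_r} |\nabla u|^2 \left(\frac{r^2}{4} - f\right) \e^{-f}}{r \int_{B_r} |\nabla u|^2\, \e^{-f}}.
\end{align}
Non-negativity is then immediate: since $f(x) = \frac{|x|^2}{4} \leq \frac{r^2}{4}$ on $B_r$, the numerator is non-negative pointwise, and the denominator is positive whenever $u$ is non-constant.

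There is no real obstacle here; the only thing to be mindful of is bookkeeping the powers of $r$ and the exponential factors when dividing the inequality of Lemma \ref{l:firstlowerD'} by $D$, and recognizing that the ``extra'' term $\frac{r}{2}$ is exactly the quotient one gets by inserting $\frac{r^2}{4}$ into the denominator integral, so that $\frac{r^2}{4} - f$ assembles naturally.
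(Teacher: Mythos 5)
Your proof is correct and follows exactly the same route as the paper: divide the inequality of Lemma \ref{l:firstlowerD'} (with $\lambda = 0$) by $D$, use $(\log U)' = D'/D - 2U/r$, identify $D$ with the $|\nabla u|^2$ integral, and absorb $\frac{r}{2}$ into the integral as $\frac{r^2}{4}$. Nothing to add.
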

 
 \begin{proof}
 Dividing by $D$ in \eqr{e:firstlowerD'} with $\lambda =0$, we see that
   \begin{align}
 (\log U)' & \geq  \frac{r}{2}    -   \frac{ 2\, \int_{B_r} |\nabla u|^2 \, f\, \e^{-f} }{r\, \int_{B_r} |\nabla u|^2 \,  \e^{-f}} = 
  \frac{2\, \int_{B_r} |\nabla u|^2 \, \left( \frac{r^2}{4} - f \right)\, \e^{-f} }{ r\, \int_{B_r} |\nabla u|^2 \,  \e^{-f}} \geq 0 
  \, .
 \end{align}
 \end{proof}
  
 When $u$ is not drift harmonic, then we will need to rewrite the right hand side of equation \eqr{e:firstlowerD'}.   This is done next (we record the result for a general $V$).
   
 \begin{Lem}    \label{l:combine}
 If $\cL\,u+V\,u=0$ on $\RR^n$,  then 
 \begin{align}
 \e^{\frac{r^2}{4}}  r^{1-n} \int_{B_r}\left( |\nabla u|^2 - V\,u^2 \right) \, f\, \e^{-f} =\frac{r}{4} \, \left(D -I\right)+\frac{1}{2} \, \e^{\frac{r^2}{4}}  r^{1-n} \int_{B_r}u^2 \, \left( \frac{n}{2} - f \right) \e^{-f}\, .
 \label{e:fromrem}
 \end{align}
 \end{Lem}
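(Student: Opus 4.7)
The strategy is to use the drift equation $\cL u + V u = 0$ to rewrite the integrand $|\nabla u|^2 - V u^2$ as a pure weighted divergence, and then to integrate by parts twice against the Gaussian weight $\e^{-f}$. The first integration by parts will produce the $\tfrac{r}{4}D$ term as a boundary contribution, and the second will produce the $-\tfrac{r}{4}I$ term together with the remaining interior integral on the right-hand side.

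Concretely, I would first observe that
\[
\tfrac{1}{2}\,\cL(u^2) = u\,\cL u + |\nabla u|^2 = |\nabla u|^2 - V u^2,
\]
the same identity already used in the proof of Lemma \ref{l:IandD}, and then combine it with the weighted divergence form $\e^{-f}\,\cL v = \dv(\e^{-f}\nabla v)$ applied to $v = u^2$. Integration by parts against the test function $f$ yields
\[
\int_{B_r} (|\nabla u|^2 - V u^2)\,f\,\e^{-f} \;=\; \tfrac{1}{2}\int_{\partial B_r} f\,\e^{-f}\,(u^2)_r \;-\; \tfrac{1}{2}\int_{B_r} \langle \nabla f,\nabla u^2\rangle\,\e^{-f}.
\]
Since $f = r^2/4$ on $\partial B_r$ and $\int_{\partial B_r} u u_r = r^{n-2} D$, the boundary piece equals $\tfrac{r^n}{4}\,\e^{-r^2/4}\,D$, which becomes $\tfrac{r}{4}D$ after multiplying by $\e^{r^2/4} r^{1-n}$.

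Next I would integrate by parts a second time, now using $\dv(\e^{-f}\nabla f) = \e^{-f}\,\cL f = \e^{-f}\left(\tfrac{n}{2} - f\right)$, which follows from the formulas $\Delta f = n/2$ and $|\nabla f|^2 = f$ recorded at the start of the section. This gives
\[
-\tfrac{1}{2}\int_{B_r} \langle \nabla f,\nabla u^2\rangle\,\e^{-f} \;=\; -\tfrac{1}{2}\int_{\partial B_r} \e^{-f}\,u^2\,f'(r) \;+\; \tfrac{1}{2}\int_{B_r} u^2\left(\tfrac{n}{2} - f\right)\e^{-f},
\]
and using $f'(r) = r/2$ together with $\int_{\partial B_r} u^2 = r^{n-1} I$, the new boundary term is $-\tfrac{r^n}{4}\,\e^{-r^2/4}\,I$, which becomes $-\tfrac{r}{4}I$ under the same normalization.

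Finally, multiplying the whole identity by $\e^{r^2/4}\,r^{1-n}$ collects the two boundary contributions into $\tfrac{r}{4}(D-I)$ and leaves precisely $\tfrac{1}{2}\,\e^{r^2/4} r^{1-n}\int_{B_r} u^2(\tfrac{n}{2} - f)\e^{-f}$ from the interior, recovering the stated formula. The main obstacle here is purely bookkeeping rather than conceptual; the one real choice is to express $|\nabla u|^2 - V u^2$ as $\tfrac{1}{2}\,\cL(u^2)$ at the outset, since this is what arranges for each of the two integrations by parts to yield a clean boundary term scaling precisely like $D$ and $I$, respectively.
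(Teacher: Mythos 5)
Your proposal is correct and takes essentially the same route as the paper: the vector field $\tfrac{1}{2}f\,\nabla(u^2)$ in your first integration by parts is exactly the $u\,f\,\nabla u$ in the paper's $\dv_f$ identity, and your second integration by parts (moving the derivative off $u^2$ onto $\e^{-f}\nabla f$) is the paper's $\dv_f(u^2\nabla f)$ computation. The only difference is presentation — you phrase the two steps as integrations by parts against the Gaussian weight, while the paper writes out the two drift-divergence identities explicitly — but the underlying calculation is identical.
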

 
 \begin{proof}
 Observe first that since $\cL u = - V\,u$,  $\frac{r^{3-n}}{4} \,   \int_{\partial B_r} u u_r = \frac{r}{4} \, D $, and 
 \begin{align}
 \dv_f \, \left(   u \, f \, \nabla u
 \right)= \left( |\nabla u|^2 - V \, u^2 \right) \, f+ u\, \langle \nabla u , \nabla f \rangle   \, , 
 \end{align}
 we have
 \begin{align}
 \e^{\frac{r^2}{4}}  r^{1-n} \int_{B_r} \left( |\nabla u|^2 - V\, u^2 \right) \, f\, \e^{-f} & = \frac{r}{4} \, D -  \e^{\frac{r^2}{4}}  r^{1-n} \int_{B_r}u \langle  \nabla  u , \nabla f \rangle \,  \e^{-f} \, .	
 \end{align}
 Next since $\dv_f (u^2 \nabla f) = 2 u \langle \nabla u , \nabla f \rangle + u^2 \, \cL f = 2 u \langle \nabla u , \nabla f \rangle + u^2 \, \left( \frac{n}{2} - f \right) $, we have
 \begin{align}
 	\e^{\frac{r^2}{4}}  r^{1-n} \int_{B_r}u \langle  \nabla  u , \nabla f \rangle \,  \e^{-f} = \frac{r}{4} \, I - \frac{1}{2} \, \e^{\frac{r^2}{4}}  r^{1-n} \int_{B_r}u^2 \, \left( \frac{n}{2} - f \right) \e^{-f}	\label{e:abo} 
		\, .
\end{align}
Combining these two equations gives the claim.   
 \end{proof}

 As a corollary, we get a lower bound for $U'$.
 
 \begin{Cor}   \label{c:Upbound}
 If $\cL\,u+\lambda\,u=0$ on $\RR^n$,  then
 \begin{align}
 U'(r)\geq \frac{r}{2}+I^{-1}(r)\,\e^{\frac{r^2}{4}}\,  r^{1-n} \int_{B_r}u^2 \, \left( f-\frac{n}{2} -2\lambda  \right) \e^{-f}   \,  .
 \end{align}
 Furthermore, given $\delta > 0$, there exists $r_1$ so that if $U(\bar{r}) \geq \delta + 2|\lambda|$ for some $\bar{r} \geq r_1$, then there exists $R$ so that for all $r\geq R$
  \begin{align}
 U'(r)\geq \frac{r}{2}   \,  .
 \end{align}
 \end{Cor}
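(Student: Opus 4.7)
The plan is to derive the first inequality by a direct algebraic combination of Lemma \ref{l:firstlowerD'} and Lemma \ref{l:combine}, and then to show that the resulting integral correction is eventually nonnegative using Theorem \ref{t:main}. For the first claim, I specialize Lemma \ref{l:combine} to $V = \lambda$ and substitute its identity for $\e^{r^2/4}\, r^{1-n} \int_{B_r}(|\nabla u|^2 - \lambda u^2)\, f\, \e^{-f}$ into \eqr{e:firstlowerD'}. The $\frac{r}{2}D$ term already present and the $-\frac{r}{2}(D-I)$ contribution arising from the substitution combine into $\frac{r}{2}I$, while the $-\frac{n}{2}$ weight from the substitution merges with the $-2\lambda$ term to give
\begin{align*}
D' \geq \frac{2U}{r}\, D + \frac{r}{2}\, I + \e^{r^2/4}\, r^{1-n} \int_{B_r} u^2 \left(f - \frac{n}{2} - 2\lambda\right) \e^{-f}.
\end{align*}
Dividing by $I$ and using $U' = D'/I - 2U^2/r$ (which follows from $U = D/I$ and $I' = 2D/r$ via Lemma \ref{l:IandD}) yields the first inequality.

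For the improvement $U'(r) \geq r/2$, it suffices to show the integral correction is eventually nonnegative. Since $f$ is radial and $\int_{\partial B_s} u^2 = s^{n-1} I(s)$, the coarea formula gives
\begin{align*}
\int_{B_r} u^2 \left(f - \tfrac{n}{2} - 2\lambda\right) \e^{-f} = \int_0^r \left(\tfrac{s^2}{4} - \tfrac{n}{2} - 2\lambda\right) \mu(s)\, ds, \qquad \mu(s) := s^{n-1} I(s) \e^{-s^2/4}.
\end{align*}
I split at $s_0 := \sqrt{2n + 8|\lambda|}$: for $s \geq s_0$ the weight $\tfrac{s^2}{4} - \tfrac{n}{2} - 2\lambda$ is nonnegative (using $-2\lambda \geq -2|\lambda|$), and on $[0, s_0]$ the function $I$ is continuous and bounded, so the integrand there is bounded in absolute value by a constant $C_0$ depending only on $u$.

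The hypothesis $U(\bar r) \geq \delta + 2|\lambda|$ in particular yields $U(\bar r) \geq \delta + 2 \sup\{0,\lambda\}$, which is exactly the hypothesis of Theorem \ref{t:main}. Hence for any $\epsilon > 0$ there exists $R_1$ such that $U(r) > \tfrac{r^2}{2} - n - 2|\lambda| - \epsilon$ for $r \geq R_1$. Integrating $(\log I)' = 2U/r$ gives $I(r) \geq c\, r^{-2(n + 2|\lambda| + \epsilon)} \e^{r^2/2}$ and hence $\mu(r) \geq c\, r^{n - 1 - 2(n + 2|\lambda| + \epsilon)} \e^{r^2/4} \to \infty$. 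The positive-weight integral on $[s_0, r]$ therefore diverges and eventually dominates $C_0$, giving nonnegativity of the whole integral and the claimed bound $U'(r) \geq r/2$.

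The main technical step is the clean algebraic cancellation producing the $\frac{r}{2} I$ term in the first claim; after that, the second claim is routine estimation combined with Theorem \ref{t:main}. A minor point in the second step is that the corollary's hypothesis implies that of Theorem \ref{t:main}, which reduces to the trivial inequality $\sup\{0,\lambda\} \leq |\lambda|$.
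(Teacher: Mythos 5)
Your proof is correct and follows essentially the same route as the paper: substitute Lemma \ref{l:combine} into Lemma \ref{l:firstlowerD'} to obtain $D' \geq 2UD/r + (r/2)I + \e^{r^2/4} r^{1-n} \int_{B_r} u^2(f - n/2 - 2\lambda)\e^{-f}$, divide by $I$, and then invoke Theorem \ref{t:main} to make the integral correction eventually nonnegative. You simply spell out the paper's one-line appeal to the rapid growth of $I(r)\e^{-r^2/4}$ by explicitly splitting the radial integral at $s_0 = \sqrt{2n + 8|\lambda|}$, which is a fair unpacking rather than a genuinely different argument.
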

 
 \begin{proof}
 Combining Lemmas \ref{l:firstlowerD'} and \ref{l:combine} gives
 \begin{align}	 
	D'(r)\geq    2\, \frac{D\,U}{r} +   \frac{r}{2}\,I +\e^{\frac{r^2}{4}}\,  r^{1-n} \int_{B_r}u^2 \, \left( f-\frac{n}{2}  -2\lambda\right) \e^{-f}  \, .
 \end{align}
 The first claim follows from this  since $U'=\left(\frac{D'}{D}-\frac{2U}{r}\right)\,U$.
 
 To prove the second claim, we just need to show that there is some $R^2 \geq 2n + 8\lambda$ with
  \begin{align}
   \int_{B_R}u^2 \, \left( f-\frac{n}{2} -2\lambda  \right) \e^{-f}  \geq 0 \,  .
 \end{align}
 This follows immediately  since $I(r) \, \e^{ - \frac{r^2}{4} }$ grows rapidly by Theorem \ref{t:main}.
 \end{proof}

 \begin{proof}
 (of Theorem \ref{t:mainLem2}.)   
 We can assume that $\limsup_{r \to \infty} U(r) > 2|\lambda|$.  Thus,   the second part of 
  Corollary \ref{c:Upbound} applies and $U'(t)\geq \frac{t}{2}$ for $t>r_0$.  In particular,  $W(t) = U(t)-\frac{t^2}{4}$ satisfies $W'\geq 0$ for $t \geq r_0$.  
  After possibly increasing $r_0$, we can assume that $r_0^2 > 2\,n+8\,\lambda$ and, moreover, that $W(r_0) > 0$ (using Theorem \ref{t:main}).
  
    By Lemma \ref{l:IandD}, for $r>s>r_0$ 
 \begin{align}
 \log \frac{ I(s)}{I(r)} =    -2\int_s^r\frac{U}{t}\,dt  \geq    \frac{s^2-r^2}{4}-2\,W(r)\int_s^r\frac{1}{t}\,dt  = \frac{s^2-r^2}{4} \log \left(\frac{s}{r}\right)^{2\,W(r)} \, .
 \end{align}
 It follows that for any constant $c \leq r_0^2$
 \begin{align}
 \e^{\frac{r^2}{4}}\,\frac{r^{1-n}}{I(r)}\int_{r_0}^r &\left( s^{2} - c\right) \, s^{n-1}  \,I(s)\,\e^{-\frac{s^2}{4}}\,ds \geq  r^{1-n-2\,W(r)}\int_{r_0}^r \left( s^{n+1+2\,W(r)} -
 c\, s^{n-1+2\,W(r)}\right) \,ds \notag \\
 &\qquad =\frac{r^3}{n+2+2\,W(r)} - \frac{c\,r}{n+ 2\, W(r)} + \frac{1}{n+2+2\,W(r)}O(r^{1-n-2W(r)}) \, , 
 \end{align}
 where $O(r^{1-n-2W(r)})$ is a term that is bounded by a constant (depending on $r_0$) times $r^{1-n-2W(r)}$.
 Inserting this in Corollary \ref{c:Upbound} with $c=2\,n+8\,\lambda$ gives the claim.  
 \end{proof}

  \section{Drift harmonic functions on open manifolds}	\label{s:s3}
  
  In this section, we will show a natural generalization (Theorem \ref{t:openintro}) of \eqr{e:inductive} to open manifolds with nonnegative Ricci curvature and Euclidean volume growth.  In fact, the assumptions on the Ricci curvature and volume growth are only used to show that the function $b$ defined below is proper.  
  
  Let again $f$ be a function on $(0,\infty)$ with $f'\geq \frac{r}{2}$.  Suppose that $M$ is an open manifold, $b:M\to \RR$ is a proper function.  For a function $u:M\to \RR$, define  (cf.   \cite{CM1} and \cite{CM2})
  \begin{align}	\label{e:Ib}
	  I(r)&=r^{1-n}\int_{b=r}u^2\,|\nabla b|\, ,\\
  	D(r)&=r^{2-n}\,\e^{f(r)}\int_{b\leq r}|\nabla u|^2\,\e^{-f(b)}\, ,  \label{e:Db} \\
 	 U(r)&=\frac{D(r)}{I(r)}\, .	\label{e:Ub}
  \end{align}
   We set $\cL_{f}\,u=\Delta\,u-\langle \nabla u,\nabla f(b)\rangle$.  
 It follows that
  \begin{align}
  I'(r)&=r^{1-n}\int_{b=r}\frac{\nabla b}{|\nabla b|}u^2+\int_{b=r}u^2\,\frac{\nabla b}{|\nabla b|^2}\left(r^{1-n}\,|\nabla b|\,d\,\Vol\right)\notag\\
  &=r^{1-n}\,\e^{f(r)}\int_{b\leq r}\cL_f \,u^2\,\e^{-f(b)}+\int_{b=r}u^2\,\frac{\nabla b}{|\nabla b|^2}\left(r^{1-n}\,|\nabla b|\,d\,\Vol\right)\, ,
  \end{align}
  where $d\,\Vol$ is the volume element of the level set of $b$.  The co-area formula gives
  \begin{align}
 	 D'(r)&=\frac{2-n}{r}\,D+ f'(r) \, D+r^{2-n}\int_{b=r}\frac{|\nabla u|^2}{|\nabla b|}\, .
  \end{align}

 If $\cL_f\,u=0$, then $\cL_f \,u^2=2\,|\nabla u|^2$.    Therefore
\begin{align}
D(r)&=  \frac{1}{2}\,r^{2-n}\,\e^{f(r)}\int_{b\leq r}\cL_f\,u^2\,\e^{-f(b)}=r^{2-n}\,\int_{b= r}u\, \langle \nabla u , \frac{\nabla b}{|\nabla b|} \rangle \, .
  \end{align} 
  The Cauchy-Schwarz inequality (cf. \eqr{e:CS}) gives for $\cL_f\,u=0$
 \begin{align}	\label{e:CSgeneral}
 	\frac{D^2}{r} = r^{3-2n} \, \left(\int_{b= r} u u_r \right)^2 \leq    I \, r^{2-n} \int_{b=r} \frac{|\nabla u|^2}{|\nabla b|} \, .
\end{align}
   It follows that for $\cL_f \,u=0$
 \begin{align}	 	\label{e:boundmfld}
D'(r)=  \frac{2-n}{r} \, D + f' \, D + r^{2-n} \,   \int_{b=r}  \frac{ |\nabla u|^2 }{|\nabla b|}   \geq  \frac{2-n}{r} \, D + f' \, D + \frac{U}{r} \, D  \, .
 \end{align}
If   $\cL_f\,u=0$ and 
\begin{align}   \label{e:crucialeq}
\frac{\nabla b}{|\nabla b|^2}\left(r^{1-n}\,|\nabla b|\,d\,\Vol\right)= 0\, ,
\end{align}
 then $I'= 2\,\frac{D}{r}$ and $\left(\log I\right)'=\frac{2\,U}{r}$.  
Hence, by \eqr{e:boundmfld}
\begin{align}
P_{f,0}\,U\geq 0 \, .  \label{e:keyineq}
\end{align}

By \cite{CM1} if $b^{2-n}$ is harmonic, then \eqr{e:crucialeq} holds.    This is due to the following:  

\begin{Lem}
Let $d\,\Vol$ denote the volume element of the level set of a function $v$, then
\begin{align}
\frac{\nabla v}{|\nabla v|^2}\,\left(|\nabla v|\,d\,\Vol\right)=\frac{\Delta\,v}{|\nabla v|}\,d\,\Vol\, .
\end{align}
\end{Lem}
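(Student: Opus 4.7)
The plan is to read the left-hand side as a Lie derivative: interpret $\frac{\nabla v}{|\nabla v|^2}\,(|\nabla v|\,d\,\Vol)$ as $\mathcal{L}_X \omega$, where $X := \nabla v/|\nabla v|^2$ is the vector field whose flow carries the level set $\{v=r\}$ to $\{v=r+t\}$ (so that $X(v)\equiv 1$) and $\omega := |\nabla v|\,d\,\Vol$ is viewed as an $(n-1)$-form.  This matches how the expression arose in the derivation of $I'(r)$ earlier in the section: after pushing the $r$-dependence into the flow of $\nabla b/|\nabla b|^2$ and differentiating under the integral sign, one summand is the directional derivative of $u^2$ and the remaining summand is exactly the Lie derivative of the surface density.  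With this reading the proof reduces to a two-line application of Cartan's magic formula, $\mathcal{L}_X = d\circ\iota_X + \iota_X\circ d$.

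The preliminary step is to recognize $|\nabla v|\,d\,\Vol$ as the restriction to the level set of the globally defined $(n-1)$-form $\omega = \iota_{\nabla v}\,d\mu$ on $M$, where $d\mu$ denotes the Riemannian volume form; indeed, for any oriented orthonormal frame $e_1,\dots,e_{n-1}$ tangent to a regular level set with unit normal $N = \nabla v/|\nabla v|$, one has $\omega(e_1,\dots,e_{n-1}) = d\mu(\nabla v,e_1,\dots,e_{n-1}) = |\nabla v|$.  Cartan then gives
\begin{equation*}
\mathcal{L}_X \omega \;=\; d(\iota_X \iota_{\nabla v}\,d\mu) \;+\; \iota_X\,d(\iota_{\nabla v}\,d\mu).
\end{equation*}
The first summand vanishes because $X$ is parallel to $\nabla v$ and $d\mu$ is alternating.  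For the second summand, $d(\iota_{\nabla v}\,d\mu) = (\dv\,\nabla v)\,d\mu = \Delta v\,d\mu$, so $\iota_X\,d\omega = \frac{\Delta v}{|\nabla v|^2}\,\iota_{\nabla v}\,d\mu = \frac{\Delta v}{|\nabla v|}\,d\,\Vol$, which is precisely the claim.

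The main obstacle, such as it is, is purely notational: pinning down what the symbol $X(\omega)$ means on the left-hand side and justifying the replacement of the formally-written surface density $|\nabla v|\,d\,\Vol$ by the intrinsic interior-product form $\iota_{\nabla v}\,d\mu$ on the ambient manifold.  Once this bookkeeping is in place the identity is a formal consequence of Cartan's formula together with $\Delta v = \dv\,\nabla v$; no analytic, curvature, or drift input enters.
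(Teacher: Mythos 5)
Your proof is correct, and it takes a genuinely different route from the paper. The paper works at the level of the unweighted surface measure: it computes the first variation of the level-set volume element along the flow of $X=\nabla v/|\nabla v|^2$ as the \emph{tangential} divergence $\dv X - \langle \nabla_N X, N\rangle$ (with $N=\nabla v/|\nabla v|$), obtains $\frac{\Delta v}{|\nabla v|^2}-\frac{N(|\nabla v|)}{|\nabla v|^2}$, and then multiplies by the density $|\nabla v|$, at which point the $N(|\nabla v|)$ term from the area change cancels against the Leibniz term $X(|\nabla v|)\,d\Vol$. You instead repackage the weighted surface form $|\nabla v|\,d\Vol$ as the ambient $(n-1)$-form $\iota_{\nabla v}\,d\mu$ and hit it with Cartan's formula; since $X\parallel\nabla v$ kills the $d\iota_X$ term and $d\iota_{\nabla v}d\mu=(\Delta v)\,d\mu$, the $N(|\nabla v|)$ cancellation never appears. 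What the paper's route buys is that it produces the Lie derivative of $d\Vol$ itself as an intermediate result (a formula of independent use); what your route buys is brevity and the avoidance of the Hessian/normal-derivative bookkeeping. One small point worth making explicit in your write-up: to interpret $\frac{\nabla v}{|\nabla v|^2}(\,\cdot\,)$ as $\mathcal{L}_X$ restricted to a fixed level set you are implicitly using that $X(v)\equiv 1$, so the time-$t$ flow of $X$ carries $\{v=r\}$ to $\{v=r+t\}$, which is what makes the ambient Lie derivative compute the $r$-derivative of the pulled-back surface measure; you state this but it deserves a sentence of justification rather than a parenthetical.
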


\begin{proof}
An easy calculation shows that the change in volume element (of the level set) is
\begin{align}
\dv\,\left(\frac{\nabla v}{|\nabla v|^2}\right)-\langle\nabla_{\frac{\nabla v}{|\nabla v|}}\left(\frac{\nabla v}{|\nabla v|^2}\right),\frac{\nabla v}{|\nabla v|}\rangle=\frac{\Delta\,v}{|\nabla v|^2}-\frac{\nabla v (|\nabla v|)}{|\nabla v|^3}\, .
\end{align}
From this the claim follows.
\end{proof}

It follows from \eqr{e:keyineq} together with Theorem \ref{t:generalbound} that:

\begin{Thm}  \label{t:open}
Suppose that $f'\geq \frac{r}{2}$.  Given $\epsilon>0$ and $\delta>0$, if $\Delta\,b^{2-n}=0$, then there exist $r_1>0$ so that if $U(\bar{r}) \geq \delta$ for some $\bar{r} \geq r_1$ and $\cL_f u = 0$, then   for all $r\geq R (\bar{r})$
\begin{align} 
U(r)> \frac{1}{2}\,r^2-n-\epsilon\, .
\end{align}
\end{Thm}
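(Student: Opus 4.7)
The plan is to reduce the statement to the one-variable comparison principle (Lemma \ref{l:indu}) that drove the proofs in Section 1. The geometric setup preceding the statement already carries the key preparatory step: the volume-element lemma shows that the harmonicity hypothesis $\Delta b^{2-n}=0$ is exactly what makes the extra boundary term \eqr{e:crucialeq} vanish, so that $(\log I)'=2U/r$ and the differential inequality $P_{f,0}\,U\geq 0$ in \eqr{e:keyineq} both transplant to the manifold setting with $I,D,U$ defined by \eqr{e:Ib}--\eqr{e:Ub}. Thus $U$ remains a positive supersolution of the same nonlinear operator as in $\RR^n$.

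Next, I would produce a strict subsolution barrier. Setting $g(r)=r^2/2-n-\epsilon$ and taking the Gaussian weight $f_0(r)=r^2/4$, Lemma \ref{l:chooseg} applied at $\lambda=0$ gives
\begin{align*}
-\frac{\epsilon}{2r}\;\geq\;P_{f_0,0}\,g
\end{align*}
for $r\geq r_1(\epsilon,n)$. Since $f'\geq r/2=f_0'$ by hypothesis, the monotonicity of $P_{f,\lambda}$ in $f'$ recorded just after the definition of $P$ promotes this to
\begin{align*}
-\frac{\epsilon}{2r}\;\geq\;P_{f,0}\,g\qquad\text{for }r\geq r_1.
\end{align*}
Enlarging $r_1$ further, one may also insist that $g(r_1)>0$, so that $g$ is admissible as a positive subsolution on $[r_1,\infty)$.

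With the supersolution $U$ and the subsolution $g$ in hand, and with the assumption $U(\bar r)\geq\delta>0$ at some $\bar r\geq r_1$, I would apply the second half of Lemma \ref{l:indu}---the case $\epsilon>0\geq\lambda$, so $\lambda=0$ is covered---using starting radius $\bar r$ and with $\epsilon$ there replaced by $\epsilon/2$. The lemma then furnishes a radius $R$ depending only on $(\delta,\bar r,\epsilon,n)$ beyond which $U(r)\geq g(r)=r^2/2-n-\epsilon$, which is the conclusion of the theorem.

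The substantive obstacle is not in this final comparison step, which is essentially formal, but in the verification that $P_{f,0}\,U\geq 0$ continues to hold on a noncompact manifold: this requires both \eqr{e:crucialeq} (which is controlled by $\Delta b^{2-n}=0$ via the volume-element lemma) and the Cauchy-Schwarz step \eqr{e:CSgeneral} on level sets of $b$ (routine but geometric). The nonnegative Ricci curvature and Euclidean volume growth hypotheses enter only to guarantee that $b$ is proper, so that the sublevel sets $\{b\leq r\}$ are compact and the integration-by-parts identities behind \eqr{e:boundmfld} are legitimate. Once these points are in place, the proof is a direct concatenation of Lemmas \ref{l:chooseg} and \ref{l:indu}.
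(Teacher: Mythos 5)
Your proposal is correct and matches the paper's approach in all its essential pieces: establish $P_{f,0}\,U\geq 0$ via the manifold versions of \eqr{e:diffI} and the Cauchy--Schwarz step \eqr{e:CSgeneral}, using the volume-element lemma to show that $\Delta b^{2-n}=0$ kills the extra boundary term so that $(\log I)'=2U/r$; then compare $U$ against the explicit barrier $g=r^2/2-n-\epsilon$, obtained from Lemma \ref{l:chooseg} at $\lambda=0$ together with the monotonicity of $P_{f,\lambda}$ in $f'$. The one place where your route diverges formally from the paper's citation is the final comparison step: the paper invokes Theorem \ref{t:generalbound}, which is stated under the hypothesis $\lambda>0$, while you (correctly) invoke the second half of Lemma \ref{l:indu}, whose hypothesis $\epsilon>0\geq\lambda$ is exactly what fits the drift-harmonic case $V=0$; since here $\lambda=0$, your choice of comparison lemma is the natural one and avoids the mismatch with the $\lambda>0$ hypothesis of Theorem \ref{t:generalbound}. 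Your observation that one may need to enlarge $r_1$ so that $g(r_1)>0$ is a small but necessary admissibility check that the paper leaves implicit, and your remark that the dependence on $h(\bar r)\geq\delta$ only improves the bound on $R$ correctly closes the loop on why $R$ depends only on $(\bar r,\delta,\epsilon,n)$.
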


In particular, it follows from \cite{CM1} that if $M$ is an open manifold with nonnegative Ricci curvature and Euclidean volume growth and $b$ is given by $b^{2-n}=G$, then $b$ is proper and thus the conclusion of Theorem \ref{t:open} holds giving Theorem \ref{t:openintro}.    (Here $G$ is the Green's function.)

 \section{Approximation of eigenfunctions}
 
Theorem \ref{t:main} implies that if $\cL u = - \lambda u$ on $\RR^n$, then either $u$ grows at most  polynomially or at least as fast at $r^{-p} \e^{ \frac{r^2}{4} }$  for some power $p$.
In the first case, $\| u \|_{L^2} < \infty$, so $u$ is a polynomial and $\lambda$ a half-integer.
The next theorem gives a local version of this; we will see a more general version of this in the next section.

\begin{Thm}	\label{t:appt}
Given $k \in \ZZ$ and $R_0$, there exist $C$ and $R_1$ so that if
  $\cL u = - \frac{k}{2}$ on $B_R$  for some $R \geq R_1$, then
 there is a polynomial $v$ of degree at most $k$ so that
 \begin{align}
 	\sup_{B_{R_0}} |u-v|^2 \leq C\, R^{4n-1+\max \{ 0 , 2k+2\}} \, \, \e^{ - \frac{R^2}{2} } \int_{ B_{R+ \frac{1}{R}} \setminus B_{R-  \frac{1}{R}}}   u^2  \, .
 \end{align}
\end{Thm}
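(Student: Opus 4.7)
The plan is to decompose $u$ as a polynomial eigenfunction $v$ plus a remainder $w$ whose spherical-harmonic modes are all ``exponentially growing'', and then apply Theorem \ref{t:main} to control each of those modes. The conclusion will force $w$ to be very small on $B_{R_0}$ relative to a large annular integral near $\partial B_R$.

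First I would expand $u$ on $B_R$ in spherical harmonics,
\begin{equation}
u(r\theta) = \sum_{l,m} a_{l,m}\, \phi_l(r)\, Y_{l,m}(\theta),
\end{equation}
where $\{Y_{l,m}\}$ is an $L^2(S^{n-1})$-orthonormal basis of spherical harmonics with $Y_{l,m}$ of degree $l$, and each $\phi_l$ is the unique (up to scale) radial solution, smooth at the origin, of the ODE obtained by separating variables in $\cL u = -\frac{k}{2}\,u$. Each $\phi_l$ extends uniquely to all of $(0,\infty)$ via the ODE. Call the mode $(l,m)$ \emph{polynomial} when $\phi_l$ is a polynomial (equivalently $l\le k$ and $l\equiv k\pmod 2$), and otherwise \emph{exponential}; in the latter case the Hermite-type dichotomy from Lemma \ref{l:ODE} gives $\phi_l(r)\sim r^{-p_l}\,\e^{r^2/4}$ as $r\to\infty$ for some $p_l$. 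Set $v=\sum_{\text{poly}} a_{l,m}\,\phi_l\, Y_{l,m}$, which is a polynomial eigenfunction of degree $\le k$, and $w=u-v$ on $B_R$. The key observation is that each summand $\phi_l\, Y_{l,m}$ is an individual global eigenfunction of $\cL$ on $\RR^n$, so Theorem \ref{t:main} can be applied term by term.

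Applying Theorem \ref{t:main} (with $V=\frac{k}{2}$) to each exponential mode $\phi_l\,Y_{l,m}$, its frequency $U(r)=r\,\phi_l'(r)/\phi_l(r)$ tends to $r^2/2$; since $\phi_l$ is a specific ODE solution depending only on $l$, $k$, $n$, the radius $\bar r_1(l)$ where $U$ first exceeds $\delta+\max\{0,k\}$ and the resulting $R(\bar r_1(l))$ are universal (independent of $a_{l,m}$). Theorem \ref{t:main} then gives $\phi_l(r)^2 \ge c_l\, r^{-2(n+\max\{0,k\}+\epsilon)}\,\e^{r^2/2}$ for $r\ge R^*_l$, and integrating $(\log I_w)'=2U_w/r$ against this bound yields
\begin{equation}
I_w(R_0) \;\le\; C\, R^{2(n+\max\{0,k\})+2\epsilon}\, \e^{-R^2/2}\, I_w(R).
\end{equation}
Orthogonality of the $Y_{l,m}$ on $\partial B_r$ forces $I_u(r)=I_v(r)+I_w(r)$, so $\int_{B_{R+1/R}\setminus B_{R-1/R}}u^2 \ge \int_{B_{R+1/R}\setminus B_{R-1/R}}w^2\ge c\,R^{n-2}\,I_w(R)$, the last inequality using that the frequency bound keeps $I_w$ nearly constant across the thin annulus; while interior elliptic estimates for the eigenvalue equation give $\sup_{B_{R_0}}|w|^2 \le C\,I_w(R_0')$ for some $R_0'$ slightly larger than $R_0$. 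Chaining these inequalities produces the claimed estimate.

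The main obstacle is tracking the dependence on the mode index $l$ so that the decomposition sums uniformly. Since $u$ is smooth on $B_R$ the coefficients $a_{l,m}$ decay rapidly in $l$ (by the ellipticity of $\cL$), while the $L^\infty$ norms of $Y_{l,m}$ and the relevant values of $\phi_l$ grow at most polynomially in $l$ after normalizing $\phi_l(r)/r^l \to 1$ at the origin. The full series therefore converges, and the exponent $R^{4n-1+\max\{0,2k+2\}}$ arises by carefully combining the $2(n+\max\{0,k\})$ loss from Theorem \ref{t:main}, the loss from the elliptic $L^\infty$ bound at $R_0$, and the $R^{n-2}$ loss from averaging over the thin annulus.
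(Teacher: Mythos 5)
Your spherical–harmonics decomposition is a genuinely different route from the paper's argument, and it is structurally plausible, but it leaves several nontrivial steps unproved that the paper's approach avoids entirely.

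The paper does \emph{not} decompose into modes.  For $k\leq -1$ it works directly with $U=U_u$: Lemma~\ref{l:Uonetime} gives a differential inequality, an ODE argument shows $U$ must exceed $n$ on any unit interval past $2n$ (this supplies the hypothesis of Theorem~\ref{t:main}), and integrating $(\log I)'=2U/r$ gives the exponential gap between $I(r)$ and $I(R)$.  For $k\geq 0$ the paper's key trick is to set $w=\nabla^{k+1}u$; commuting $\partial_i$ with $\cL$ drops the eigenvalue by $\tfrac12$ each time, so $w$ lands in the $k\leq -1$ regime, the previous case applies to $w$, and $v$ is simply the degree-$k$ Taylor polynomial of $u$ at $0$.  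Your $v$ is instead the projection onto polynomial modes, and $w=u-v$ is controlled mode by mode.  Each buys something: the paper's trick gives a clean, finite argument with no convergence issues; yours, if completed, actually yields a sharper power of $R$ and a more structured $v$ (a genuine eigenfunction of $\cL$).

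The gaps in your argument that need attention.  (i)~\emph{Uniformity of the threshold radius.}  Theorem~\ref{t:main} only produces a conclusion after some $R(\bar r_1)$ that depends on when $U$ first clears $\delta+\max\{0,k\}$.  You assert this is uniform in $l$; that is plausible (for $l$ large, $U_l(0^+)=l$ already clears the threshold, and there are finitely many small $l$), but it must be argued, not asserted — note the paper itself devotes a paragraph (the ``$U$ exceeds $n$ on $[r_0,r_0+1]$'' step) to exactly this verification of the hypothesis of Theorem~\ref{t:main}, and your modes require an analogous check.  (ii)~\emph{Behavior on $[0,R^*]$.}  The radial modes $\phi_l$ with $l$ small and $l\not\equiv k\pmod 2$ can have $\phi_l'(0^+)<0$, so $I_w$ need not be increasing near the origin and $U_w$ can be negative there.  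Your chain $\sup_{B_{R_0}}|w|^2\leq C\,I_w(R_0')\leq\cdots$ implicitly assumes $I_w$ is monotone on $[R_0,R^*]$; this has to be replaced by a separate bound on $\sup_{[0,R^*]}I_w$ in terms of $I_w(R^*)$, which is again uniform only after a finite-versus-large-$l$ case split.  (iii)~\emph{Series convergence and the exponent.}  You wave at the rapid decay of $a_{l,m}$ and at the final $R^{4n-1+\max\{0,2k+2\}}$ bookkeeping.  These need to be carried out; in fact, if you track your own estimates you do not reproduce that exponent — you get a smaller one — which is fine for proving the stated theorem but indicates the final paragraph is not actually a computation.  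None of these gaps is fatal, but together they make your route substantially longer than the paper's two-paragraph derivative-and-Taylor argument.
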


\begin{proof}
We will prove this in two steps.  Suppose first that $k \leq -1$.  Lemma \ref{l:Uonetime} gives
\begin{align}	\label{e:fromU1}
	(\log U)' \geq \frac{2-n}{r} + \frac{r}{2} + \frac{r}{2U} - \frac{U}{r} \, .
\end{align}
We will show first that $U$ goes above $n$ on any interval $[r_0 , r_0 + 1]$ for $r_0 \geq 2n$.  To see this, suppose that $U \leq n$ on such an interval and use \eqr{e:fromU1} 
to get that
\begin{align}
	U' \geq \frac{r}{2} + U \, \left(\frac{2-n}{r} + \frac{r}{2} - \frac{n}{r}   \right) > n \, .
\end{align}
This is impossible since $0 \leq U \leq n$, giving the claim.  Thus,  Theorem \ref{t:main} gives $\bar{R}$ depending on $n$ so that $U(r) >  \frac{r^2}{2} - n$ for all $r \geq \bar{R}$.
Given  $r \geq \bar{R}$, 
integrating this from $r$ to $R$ gives  
\begin{align}
	\log \frac{ I(R)}{I(r)} \geq 2 \, \int_r^R \left( \frac{s}{2} - \frac{n}{s} \right) \, ds = \frac{1}{2} \, \left( R^2 - r^2 \right) - 2n \, \log  \frac{R}{r} \, .
\end{align}
Letting $r = \min \{ \bar{R} , 2 R_0 \}$, exponentiating and applying elliptic estimates  gives
\begin{align}	\label{e:4p6}
	\sup_{B_{R_0}} |u|^2 \leq c \, I(r) \leq C \, R^{2n} \, \e^{ - \frac{R^2}{2} } \, I(R)    \, .
\end{align}
The case $k\leq -1$ follows from this since $I(R) \leq c \, R^{2-n} \, \int_{ B_{R+ \frac{1}{R}} \setminus B_R} u^2$.

Suppose now that $k \geq 0$ and let $w$ be any $(k+1)$-st partial derivative of $u$.  It follows that $\cL w = - \frac{1}{2} \, w$, so \eqr{e:4p6} implies that
\begin{align}
	\sup_{B_{R_0}} |\nabla^{k+1}u|^2 \leq  C \, R^{2n} \, \e^{ - \frac{R^2}{2} } \int_{\partial B_R} |\nabla^{k+1} u|^2 \, .
\end{align}
Elliptic estimates on balls of radius $R^{-1}$ centered on $\partial B_R$ give that
\begin{align}
	\sup_{\partial B_R} |\nabla^{k+1} u|^2 \leq C \, R^{2k+2 +n} \, \int_{ B_{R+ \frac{1}{R}} \setminus B_{R- \frac{1}{R}}} u^2
\end{align}
The theorem follows  with $v$ given by the degree $k$ Taylor polynomial for $u$ at $0$.
\end{proof}

\section{Approximate eigenfunctions on cylinders}
   
  In this section, we let $M  = N  \times \RR^{n}$ be a product manifold where $N$ is closed.  Let $x$ be coordinates on $\RR^{n}$,  define $f = \frac{|x|^2}{4}$   and  the drift Laplacian 
 $
  	\cL = \Delta - \frac{1}{2} \, \nabla_x = \Delta_N + \cL_{\RR^n}  
$.
  Given a function $u$, we define $I$ and $D$ by
  \begin{align}
  	I(r) &= r^{1-n} \int_{|x| = r} u^2 \, , \\
	D(r) &= r^{2-n} \, \int_{|x| = r} u \, u_r = \e^{ \frac{r^2}{4}} \, r^{2-n} \, \int_{|x| < r } \left(|\nabla u|^2 + u \cL \, u \right) \, \e^{-f} \, .
  \end{align}
  Here $u_r$ denotes the normal derivative of $u$ on the level set $|x|  = r$.  Since $N$ is compact, $f$ is proper and the integrals exist.
    It is easy to see that $I' = \frac{2D}{r}$ and $(\log I)' = \frac{2U}{r}$, where 
  the frequency $U$ is given by $U = \frac{D}{I}$.
   
  \vskip1mm
The next theorem gives a strong approximation for approximate eigenfunctions on $M$.  The theorem is stated for eigenvalue $- \frac{1}{2}$ for simplicity, but can be modified easily for other eigenvalues by arguing as in the previous section.  This result is a key ingredient in \cite{CM3}.

\begin{Thm}	\label{t:app}
There exist $\bar{R}$ and $C$ depending on $n$ so that if
 $v$  is a function on $\{ |x| \leq R \}$, where $\bar{R} \leq R   $,     and 
\begin{enumerate}
\item $\left| - \frac{1}{2} \, v^2 + v \, \cL v \right| \leq \psi^2 + \epsilon\, \left( \frac{v^2}{2} + |\nabla v|^2 \right) $, where $\psi$ is a function and $\epsilon < \frac{1}{2}$,
\end{enumerate}
then we get for any $\Lambda \in (0, 1/2)$ that
\begin{align}	\label{e:goal}
	\int_{|x| < {4n}} v^2 \, \e^{-f}  \leq  \frac{2}{\Lambda} \, \| \psi \|_{L^2}^2 + C \, I(R) \, R^{2n} \, \e^{ - \frac{ (1-\epsilon-\Lambda)\, R^2}{2(1+\epsilon+ \Lambda)^2}}  \, .
\end{align}
\end{Thm}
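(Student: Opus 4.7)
The proof should mirror the strategy used for true eigenfunctions in Sections~1--2, applied to the frequency of $v$, with the added burden of tracking the $\psi$-error and the $\epsilon$-slop through every step.

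\emph{Step 1 (Setup on the cylinder).} First I would verify that the machinery of Lemma \ref{l:IandD} transfers verbatim to $M = N \times \RR^n$. Since $\partial B_r = N \times \partial B_r^{\RR^n}$ and the drift $f$ depends only on $|x|$, the coarea identities $I'=2D/r$, $(\log I)'=2U/r$, and
\[D'(r)=\Bigl(\frac{2-n}{r}+\frac{r}{2}\Bigr)D+r^{2-n}\int_{\partial B_r}(|\nabla v|^2+v\,\cL v)\]
all hold with the $N$-integration passing under the $r$-derivative. Hypothesis~(1) then gives the pointwise bound $v\,\cL v \geq \tfrac{1-\epsilon}{2}v^2-\epsilon|\nabla v|^2-\psi^2$.

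\emph{Step 2 (Approximate $U$-inequality).} Inserting the pointwise inequality into the $D'$-formula, using Cauchy--Schwarz as in \eqr{e:CS} to bound $r^{2-n}\int_{\partial B_r}|\nabla v|^2 \geq DU/r$, and dividing by $D$ yields an approximate version of Lemma \ref{l:Uonetime}:
\[(\log U)'\geq \frac{2-n}{r}+\frac{r}{2}-\frac{(1+\epsilon)U}{r}+\frac{(1-\epsilon)\,r}{2U}-\mathcal{E}_\psi(r),\qquad \mathcal{E}_\psi(r) := \frac{r^{2-n}\int_{\partial B_r}\psi^2}{D(r)}.\]
Up to the $\psi$-error, this is exactly the sub-solution inequality $P_{f,\lambda}\,U\geq 0$ for an effective $\lambda$ shifted by the $\epsilon$-perturbation.

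\emph{Step 3 (Barrier comparison and dichotomy).} I would then apply a variant of Lemma \ref{l:indu} / Theorem \ref{t:generalbound} with the explicit barrier $g(r)=\tfrac{(1-\epsilon-\Lambda)\,r^2}{2(1+\epsilon+\Lambda)^2}$, chosen so that $P_{f,\ldots}\,g \leq -c/r$ with the same effective constants produced in Step~2. The comparison argument forces a dichotomy: either (a) on the whole interval the $\psi$-error is negligible relative to $D$, in which case $U(r)\geq g(r)$ holds for $r$ up to $R$; or (b) $\mathcal{E}_\psi$ dominates on some annulus, in which case a direct weighted energy estimate (multiplying the approximate PDE by $v\,e^{-f}$ and integrating) yields $\Phi(4n):=\int_{|x|<4n}v^2 e^{-f}\leq \tfrac{2}{\Lambda}\|\psi\|^2_{L^2}$, the spectral gap $\Lambda$ appearing as the reciprocal of an absorption factor.

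\emph{Step 4 (Integration and coarea).} In case (a), integrating $(\log I)'=2U/r$ from $4n$ to $R$ gives
\[I(4n)\leq I(R)\,\exp\!\Bigl(-\tfrac{(1-\epsilon-\Lambda)(R^2-(4n)^2)}{2(1+\epsilon+\Lambda)^2}\Bigr),\]
and the coarea identity $\tfrac{d}{dr}\int_{B_r}v^2 e^{-f}=r^{n-1}I(r)e^{-r^2/4}$ converts this into the $\Phi(4n)$-bound, with the polynomial factor $R^{2n}$ accounting for the integration constants and the lower-order terms in $U\geq g$. Combining cases (a) and (b) produces \eqr{e:goal}.

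\emph{Main obstacle.} The delicate point is the precise exponent $\tfrac{(1-\epsilon-\Lambda)}{2(1+\epsilon+\Lambda)^2}$: the factor $(1+\epsilon+\Lambda)$ in the denominator enters once from the $\epsilon$-perturbation of the effective eigenvalue in Step~2 and again from the $\Lambda$-gap needed to run the maximum principle in Step~3, so coordinating these so that the square appears cleanly (with matching $2/\Lambda$ prefactor on the $\psi$-side) requires an attentive choice of barrier and careful tracking of how much of $\|\psi\|^2_{L^2}$ is absorbed at each stage of the comparison.
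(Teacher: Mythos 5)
Your outline captures the right high-level strategy (transfer the frequency machinery, run a barrier argument, close with a coarea/integration step), but there is a genuine gap in Steps 2–3 that the paper avoids by a device you have not built in: the modified energy $E$ and modified frequency $U_E$.

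Concretely, your Step 2 produces an error term $\mathcal{E}_\psi(r) = \frac{r^{2-n}\int_{\partial B_r}\psi^2}{D(r)}$. This is a \emph{per-sphere} quantity. Hypothesis (1) only controls $\psi$ through $\|\psi\|_{L^2}^2 = \int \psi^2 \e^{-f}$, a ball/global quantity; there is no mechanism to bound $\int_{\partial B_r}\psi^2$ for a given $r$, nor does integrating $(\log U)'$ in $r$ convert $\int \mathcal{E}_\psi\,dr$ into anything controlled by $\|\psi\|_{L^2}^2$, because of the weights $r^{2-n}$ and $1/D(r)$. The paper sidesteps this by defining the energy
\[E(r) = r^{2-n}\,\e^{r^2/4}\int_{|x|<r}\left(|\nabla v|^2 + \tfrac{1}{2}v^2\right)\e^{-f}\]
and observing that $D - E$ is a \emph{ball} integral of $v\cL v - \tfrac12 v^2$, hence controlled pointwise by hypothesis (1) as $|D-E|\leq \epsilon E + r^{2-n}\e^{r^2/4}\|\psi\|_{L^2}^2$. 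The differential inequality is then written for $U_E = E/I$ (Lemma \ref{l:diffineq}), not for $U$, which also cures a second problem you do not address: nothing in your setup prevents $D$ (hence $U$) from being zero or negative, whereas $E>0$ always and one deduces $D \geq (1-\epsilon-\Lambda)E > 0$, giving the monotonicity $I'\geq 0$ that the final reverse-Poincar\'e step requires.

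Your Step 3 dichotomy is also placed in the wrong spot. The paper's dichotomy is at the very start: either $\int_{|x|<4n}v^2\e^{-f} < \frac{2}{\Lambda}\|\psi\|_{L^2}^2$ (and \eqr{e:goal} is immediate), or its negation holds, which gives $\|\psi\|_{L^2}^2 \leq \tfrac{\Lambda}{2}\int_{|x|<4n}v^2\e^{-f} \leq \Lambda E$ for every $r\geq 4n$, turning the additive $\psi$-error into the proportional error $|D-E|\leq (\epsilon+\Lambda)E$. This proportional control — not a case split over whether $\mathcal{E}_\psi$ ``dominates on some annulus'' — is what makes the barrier comparison run uniformly and produces the clean exponent $\frac{(1-\epsilon-\Lambda)}{2(1+\epsilon+\Lambda)^2}$: the factor $(1-\epsilon-\Lambda)$ enters once via $U\geq (1-\epsilon-\Lambda)U_E$, while $(1+\epsilon+\Lambda)^2$ comes from bounding $\frac{U}{r}\bigl(\frac{D}{E}-2\bigr) \geq -(1+\epsilon+\Lambda)^2\frac{U_E}{r}$ in the differential inequality for $\log U_E$. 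Without introducing $E$, your barrier argument has no way to absorb the $\psi$-error and the square in the exponent does not appear.

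Finally, your Step 4 ``coarea converts this into the $\Phi(4n)$-bound'' elides the last step: integrating $I$ gives control of an annular integral, and you still need a Caccioppoli / reverse-Poincar\'e estimate (as in \eqr{e:524}) to pass from $\int_{\{4n<|x|<5n\}}v^2\e^{-f}$ to $\int_{\{|x|<4n\}}v^2\e^{-f}$, and you need $I'\geq 0$ on $[4n,5n]$ (again from $D\geq(1-\epsilon-\Lambda)E>0$) for that estimate to close.
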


\vskip2mm
In the proof, we will need a modified version of the frequency.  Define  $E(r) $ by 
\begin{align}
	E(r)  = 
	 r^{2-n} \, \e^{ \frac{r^2}{4} } \, \int_{|x| <r} \left\{ |\nabla v|^2 + \frac{1}{2} \, v^2   \right\} \e^{-f}= D(r) - r^{2-n} \, \e^{ \frac{r^2}{4} } \, \int_{|x|<r}   \left( v\cL v - \frac{1}{2} \, v^2   \right)    \e^{-f}  \, . \notag
\end{align}
  We   define a modified frequency $U_E$ by
\begin{align}
	U_E(r) = \frac{E(r)}{I(r)} \, .
\end{align}

\begin{Lem}	\label{l:diffineq}
We have
\begin{align}
	\left( \log U_E \right)'    \geq 
	\frac{2-n}{r}   + \frac{r}{2}   +   \frac{r}{2\, U_E} +  \frac{U}{r} \left(  \frac{D}{E} - 2  \right) \, .
\end{align}
\end{Lem}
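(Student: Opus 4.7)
The plan is to differentiate $E$ directly, use the Cauchy–Schwarz inequality \eqr{e:CS} applied to the boundary gradient term, and then use $(\log U_E)' = (\log E)' - (\log I)' = E'/E - 2U/r$.

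First I would write $E(r) = r^{2-n}\,\e^{r^2/4}\,A(r)$ with $A(r) = \int_{|x|<r}\{|\nabla v|^2 + v^2/2\}\,\e^{-f}$, and differentiate using the chain/product rule. The prefactor contributes $\left(\frac{2-n}{r} + \frac{r}{2}\right)E$, while the derivative of $A$, combined with $\e^{-f} = \e^{-r^2/4}$ on $\{|x| = r\}$, yields the boundary integral
\begin{align}
E'(r) = \frac{2-n}{r}\,E + \frac{r}{2}\,E + r^{2-n}\int_{|x|=r}\left\{|\nabla v|^2 + \tfrac{1}{2}\,v^2\right\}.
\end{align}

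Next I would estimate the two pieces of the boundary integral. For the $v^2$ piece, just by the definition of $I$, one has $r^{2-n}\int_{|x|=r}\tfrac{1}{2}\,v^2 = \tfrac{r}{2}\,I$. For the $|\nabla v|^2$ piece, I would apply the Cauchy–Schwarz argument from \eqr{e:CS}, namely
\begin{align}
\frac{D^2}{r} = r^{3-2n}\left(\int_{|x|=r} v\,v_r\right)^2 \leq I\cdot r^{2-n}\int_{|x|=r} v_r^2 \leq I\cdot r^{2-n}\int_{|x|=r} |\nabla v|^2,
\end{align}
which gives $r^{2-n}\int_{|x|=r}|\nabla v|^2 \geq D^2/(rI) = DU/r$. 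Putting these together,
\begin{align}
E'(r) \geq \frac{2-n}{r}\,E + \frac{r}{2}\,E + \frac{DU}{r} + \frac{rI}{2}.
\end{align}

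Finally, I would divide by $E$ and subtract $2U/r = (\log I)'$. The term $rI/(2E)$ becomes $r/(2U_E)$ since $U_E = E/I$, and $DU/(rE) - 2U/r$ rearranges as $(U/r)(D/E - 2)$, giving exactly
\begin{align}
(\log U_E)'(r) \geq \frac{2-n}{r} + \frac{r}{2} + \frac{r}{2\,U_E} + \frac{U}{r}\left(\frac{D}{E} - 2\right),
\end{align}
as claimed. There is no real obstacle here — the only mildly subtle point is making sure the Cauchy–Schwarz is applied to the full gradient (not just the radial derivative), so that the bound $DU/r$ survives even though $E$ is defined using $|\nabla v|^2$ rather than $v_r^2$; everything else is bookkeeping.
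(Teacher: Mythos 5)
Your proof is correct and takes essentially the same route as the paper: differentiate $E$, use the Cauchy–Schwarz inequality \eqr{e:CS} to bound the boundary $|\nabla v|^2$ term from below by $UD/r$, and then convert to $(\log U_E)'$ by dividing through by $E$ and subtracting $(\log I)' = 2U/r$. The only difference is that you spell out the bookkeeping that the paper leaves implicit.
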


\begin{proof}
Differentiating   gives that
\begin{align}
	E'(r) = \frac{2-n}{r} \, E + \frac{r}{2} \, E + \frac{r}{2}\, I +  r^{2-n} \, \int_{|x|=r}   |\nabla v|^2  \geq 
	\frac{2-n}{r} \, E + \frac{r}{2} \, E + \frac{r}{2}\, I +  \frac{UD}{r} \, , 
\end{align}
where the inequality used  the Cauchy-Schwarz inequality, \eqr{e:CS}.  The lemma follows from this since  $\frac{I'}{I} = \frac{2U}{r}$.
\end{proof}

\begin{proof}[Proof of Theorem \ref{t:app}]
We get \eqr{e:goal} immediately if
\begin{align}	\label{e:case1}
	\int_{|x|<{4n} } v^2 \e^{-f} < \frac{2}{\Lambda} \, \| \psi \|_{L^2}^2 \, .
\end{align}
 Suppose, therefore, that \eqr{e:case1} fails.  Given any $r \geq 4n$, it follows from (1)   that
\begin{align}
 	\left| D - E \right| & \leq   \epsilon \, E +  r^{2-n}  \,   \e^{ \frac{r^2}{4} } \, \| \psi \|_{L^2}^2 \leq 
	 \epsilon \, E +  \Lambda \, r^{2-n}  \,   \e^{ \frac{r^2}{4} } \, \int_{ |x| < 4n} \frac{v^2}{2} \, \e^{-f} \leq (\epsilon + \Lambda) \, E 
	 \, .
\end{align}
Therefore, if  $4n \leq r  $, then:  $0 \leq I'(r) $,
\begin{align}  \label{e:step1a}
	\left| U - U_E \right| (r) &\leq ( \epsilon + \Lambda) \, U_E (r) \, , \\
	\left( \log U_E \right)'    &\geq 
	\frac{2-n}{r}   + \frac{r}{2}   +   \frac{r}{2\, U_E} - ( 1+ \epsilon + \Lambda)^2 \, \frac{U_E}{r} \, , \label{e:step1}
\end{align}
where the last inequality also used Lemma \ref{l:diffineq}.

We first  show that      $ \max_{ [4n,8n]} \, U_E \geq n$.  To see this, suppose instead that $U_E < n$ on $[4n,8n]$ and  use \eqr{e:step1} to get 
\begin{align}	 
	\left( \log U_E \right)'      > \frac{2n}{U_E} \, .
\end{align}
Multiplying by $U_E$, we  get an interval of length $4n$ where $0 < U_E < n$ but $2n < U_E'$.  This is impossible, so we conclude that
 $ \max_{ [4n,8n]} \, U_E \geq n$ as claimed.

We claim that there exists $\bar{R}= \bar{R} (n) \geq 5n$ so that for all $r \geq \bar{R}$ we have 
\begin{align}	\label{e:thresh}
	U_E (r) > \frac{ r^2 -2n}{2(1+\epsilon +\Lambda)^2} \, .
\end{align}
The key  is that if \eqr{e:thresh} fails for some $r \geq 4n$, then \eqr{e:step1} implies that
\begin{align}	 	\label{e:maxpU}
	\left( \log U_E \right)'    \geq 
	\frac{2}{r}     +   \frac{r}{2\, U_E} \geq 
	\frac{6}{r}         \, .
\end{align}
On the other hand, for $r \geq 4n$, we have
\begin{align}	\label{e:loggb}
	\left( \log \frac{ r^2 -2n}{2(1+\epsilon +\Lambda)^2}  \right)' = \frac{2r}{r^2-2n} < \frac{3}{r} 
	\, .
\end{align}
Integrating \eqr{e:maxpU} and \eqr{e:loggb} and using  that $ \max_{ [4n,8n]} \, U_E \geq n$, gives an upper bound for the maximal interval where \eqr{e:thresh} fails. The first derivative test, \eqr{e:maxpU}, and \eqr{e:loggb}  imply that once \eqr{e:thresh} holds for some $R \geq 4n$, then it also holds for all   $r \geq R$.  This gives the claim.

Using \eqr{e:step1a} and \eqr{e:thresh}, we get for $r \geq \bar{R}$ that
\begin{align}	\label{e:thresh2}
	U (r) \geq  (1-\epsilon-\Lambda) U_E (r) >    \frac{ (1-\epsilon-\Lambda)  }{(1+\epsilon+ \Lambda)^2} \left( \frac{r^2}{2} - n \right) \equiv \kappa \,  \left( \frac{r^2}{2} - n \right) \, ,
\end{align}
where the last equality defines $\kappa$.
Integrating this from $\bar{R}$ to $R$ gives that
\begin{align}
	\log \frac{I(R)}{I(\bar{R})} \geq 2\, \int_{\bar{R}}^R  \frac{U(r)}{r} \, dr &\geq \kappa \int_{\bar{R}}^R \left(    \, r - \frac{2n}{r} \right) \, dr  =  \kappa \, \left( \frac{R^2 - \bar{R}^2}{2} -  2n  \, \log \frac{R}{\bar{R}} \right) \, .
\end{align}
Since    $\bar{R}$ is uniformly bounded, exponentiating gives that
\begin{align}	\label{e:521}
	\sup_{4n \leq r \leq \bar{R} } \, \, I(r) = I(\bar{R}) \leq c_n \, I(R) \, R^{2n\, \kappa} \, \e^{ - \frac{\kappa}{2}\, R^2 }   \, .
\end{align}

We use the reverse Poincar\'e to get   the  integral bound on $|x| < 4n$.  Let $\eta \leq 1$ be a cutoff that is one on $\{ |x| < 4n \}$,  zero for $|x| > 5n$, and has $|\nabla \eta| \leq 1$. 
Integration by parts   gives
\begin{align}
	\int \eta^2 \left( |\nabla v|^2 + \frac{v^2}{2} \right) \e^{-f} &=- \int \left( 2\eta v \langle \nabla v , \nabla \eta \rangle + \eta^2 \left(v \cL v - \frac{v^2}{2} \right) \right) \, \e^{-f} \, .
\end{align}
Using (2) on the last term (note that $\epsilon < 1/2$) and absorbing the first term  gives
\begin{align}
	\frac{1}{2} \, \int \eta^2 \left( |\nabla v|^2 + \frac{v^2}{2} \right) \e^{-f} &=\| \psi \|_{L^2}^2 - \int \left( 2\eta v \langle \nabla v , \nabla \eta \rangle   \right) \, \e^{-f} \notag \\
	&\leq \| \psi \|_{L^2}^2  + \frac{1}{2} \, \int  \eta^2 v |\nabla v|^2 \, \e^{-f} + 2\int |\nabla \eta|^2    v^2 \, \e^{-f}   \, .
\end{align}  
Since $\eta = 1$ for $|x| < 4n$ and $|\nabla \eta| \leq 1$ is only nonzero for $4n < |x| < 5n$, it follows that
\begin{align}	\label{e:524}
	\int_{ \{ |x| < 4n \} } v^2 \, \e^{-f}   \leq 4\, \| \psi \|_{L^2}^2 + 8 \, \int_{\{ 4n < |x| < 5n \} } v^2 \, \e^{-f} \leq 4\, \| \psi \|_{L^2}^2 + C \, I(5n) \, ,
\end{align}
where we used that $I'(r) \geq 0$  for $r \geq 4n$.
Combining  \eqr{e:521} and \eqr{e:524} gives \eqr{e:goal}.
\end{proof}

\end{document}